\newtheorem{theorem}{Theorem}[section]
\newtheorem{proposition}[theorem]{Proposition}
\newtheorem{conjecture}[theorem]{Conjecture}
\newtheorem{lemma}[theorem]{Lemma}
\newtheorem{corollary}[theorem]{Corollary}
\newtheorem{remark}[theorem]{Remark}
\author[I. G. Gargate]{Ivan Gonzales Gargate}
\address{Universidade Tecnol\'ogica Federal do Paran\'a, campus Pato Branco}
\email{ivangargate@utfpr.edu.br}
\author[T. C. de Mello]{Thiago Castilho de Mello}
\address{Universidade Federal de S\~ao Paulo, Instituto de Ci\^encia e Tecnologia}
\email{tcmello@unifesp.br}
\title[A new approach to the Lvov-Kaplansky conjecture through gradings]{A new approach to the Lvov-Kaplansky conjecture through gradings}
\date{}
\keywords{Images of polynomials on algebras, Lvov-Kaplansky conjecture, matrix algebras, graded algebras}\subjclass[2010]{16R10, 16W50, 16R99, 16R50}
\begin{document}

\begin{abstract}
	In this paper we consider images of (ordinary) noncommutative polynomials on matrix algebras endowed with a graded structure. We give necessary and sufficient conditions to verify that some multilinear polynomial is a central polynomial, or a trace zero polynomial, and we use this approach to present an equivalent statement to the Lvov-Kaplansky conjecture.
\end{abstract}

\maketitle

\section{Introduction}

Let $K$ be a field, $X$ be a countable set and $K\langle X \rangle$ denote the free associative algebra, freely generated by a set $X$ (i.e., the set of noncommutative polynomials in the variables of $X$). If $f(x_1, \dots, x_m)\in K\langle X\rangle$, and $A$ is a $K$-algebra, $f$ defines a map (also denoted by $f$)
\[\begin{array}{cccc}
f: & A^m & \longrightarrow & A\\
  & (a_1, \dots, a_m) & \longmapsto & f(a_1, \dots, a_m)
\end{array}\] by evaluation of variables on elements of $A$. One may ask what is the image of a given polynomial, or either which subsets of $A$ are the image of some polynomial in $K\langle X \rangle$.

Problems of this type were attributed to Kaplansky, when $A=M_n(K)$. Also, if $f$ is a multilinear polynomial, Lvov asked if the image of $f$ is always a vector subspace of $M_n(K)$ (see \cite[Problem 1.98]{Dniester}).
One can prove that if the answer to this question is true, then the image of $f$ must be one of the following:
\[\{0\}, \quad K, \quad sl_n(K) \quad \textrm{ or } \quad M_n(K).\]
Here $K$ represents the set of scalar matrices and $sl_n(K)$ the set of trace zero matrices. This is now known as the Lvov-Kaplansky conjecture:
\begin{conjecture}[Lvov-Kaplansky conjecture]
	If $f(x_1, \dots, x_m) \in K\langle X \rangle$ is a multilinear polynomial, then its image on $M_n(K)$ is $\{0\}$, $K$, $sl_n(K)$ or $M_n(K)$.
\end{conjecture}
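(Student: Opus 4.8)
The plan is to split the statement into a routine structural part and a single hard geometric core, and to be candid that the core is precisely Lvov's open question. First I would record the two elementary symmetries enjoyed by the image of a multilinear $f$. Because $f$ is homogeneous of degree $1$ in each variable, $f(\lambda a_1, a_2, \dots, a_m) = \lambda f(a_1, \dots, a_m)$, so $\operatorname{Im}(f)$ is a cone containing $0$: it is stable under multiplication by scalars. Because substitution commutes with the inner automorphism $a \mapsto u a u^{-1}$, we also have $f(u a_1 u^{-1}, \dots, u a_m u^{-1}) = u\, f(a_1, \dots, a_m)\, u^{-1}$ for every $u \in GL_n(K)$, so $\operatorname{Im}(f)$ is invariant under conjugation; that is, it is a union of similarity classes, stable under scaling. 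Both facts are immediate from the form of the polynomial and require no work.

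Second, I would invoke the classification of conjugation-invariant subspaces. Under the adjoint action of $GL_n(K)$, the module $M_n(K)$ splits (when $\operatorname{char} K \nmid n$, with the usual adjustment otherwise) as $K\cdot I \oplus sl_n(K)$ with $sl_n(K)$ irreducible, so the only conjugation-invariant vector subspaces of $M_n(K)$ are $\{0\}$, $K\cdot I$, $sl_n(K)$ and $M_n(K)$. Hence the conjecture is equivalent to the single assertion that $\operatorname{Im}(f)$ is closed under addition: granting that, the first paragraph forces $\operatorname{Im}(f)$ to be one of the four listed sets. Which one occurs is then decided by the vanishing data of $f$, namely $\operatorname{Im}(f) = \{0\}$ exactly when $f$ is a polynomial identity of $M_n(K)$, $\operatorname{Im}(f) = K\cdot I$ exactly when $f$ is a nonidentity central polynomial, and the split between $sl_n(K)$ and $M_n(K)$ is governed by whether $f$ is a trace-zero polynomial — precisely the invariants for which workable criteria are produced earlier in the paper.

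The entire difficulty is thus to prove additivity, and I expect this to be the decisive obstacle, since the two symmetries above are genuinely insufficient: an arbitrary conjugation-invariant cone, that is, a scaling-stable union of similarity classes, need not be a subspace. The approach I would pursue is geometric. Passing to $\overline{K}$, the map $f\colon M_n(\overline{K})^m \to M_n(\overline{K})$ is a morphism of affine varieties, so by Chevalley's theorem $\operatorname{Im}(f)$ is constructible and its Zariski closure $V$ is an irreducible, conjugation-invariant cone. I would first try to show that $V$ itself is one of the four subspaces, using irreducibility together with the conjugation action to exclude lower-dimensional invariant cones, and then, separately, that $\operatorname{Im}(f)$ exhausts $V$ rather than omitting a proper closed piece. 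The cleanest reformulation of what remains is a dichotomy: show that every multilinear $f$ that is neither an identity nor a central polynomial, and that is trace-zero, has image exactly $sl_n(K)$ (with the non-trace-zero analogue for $M_n(K)$), invoking the central and trace-zero characterizations of the paper to rule out intermediate behavior. I do not expect either the \emph{closure-is-a-subspace} step or the \emph{image-exhausts-closure} step to be routine; producing a single explicit, well-chosen evaluation whose $GL_n(K)$-orbit and scalings already generate all of $sl_n(K)$ additively is, in my view, where the true obstruction lies.
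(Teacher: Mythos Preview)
The statement you were asked to prove is not a theorem in the paper; it is the Lvov--Kaplansky \emph{conjecture}, stated as such and left open. There is no proof in the paper to compare against. Your proposal is honest about this: you correctly reduce the question to the single assertion that $\operatorname{Im}(f)$ is closed under addition, you correctly observe that the conjugation-invariant cone structure alone does not force this, and you correctly flag the additivity step as the unresolved core. That reduction is standard and is exactly the remark the paper makes just before stating the conjecture.

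What the paper actually does is different in emphasis. Rather than pursuing the Zariski-geometric route you sketch, it endows $M_n(K)$ with the Vasilovsky $\mathbb{Z}_n$-grading and introduces two testable conditions (S1) and (S2) on evaluations at homogeneous elements. It then proves (Remark~\ref{considerations} and Theorem~\ref{equivalence}) that (S1), (S2) determine the \emph{linear span} of $\operatorname{Im}(f)$ among the four candidate subspaces, and that the conjecture is equivalent to upgrading ``linear span of $\operatorname{Im}(f)$'' to ``$\operatorname{Im}(f)$'' in the two nontrivial cases. This is a reformulation, not a proof; your geometric outline and the paper's graded outline both stall at the same point, namely passing from the span (or Zariski closure) of the image to the image itself. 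Neither you nor the paper closes that gap, because it is open.
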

A solution to the above conjecture is known only for $m=2$ or $n=2$ (under some restrictions on the base field $K$), and there are partial results for $n=3$ and $m=3$, see \cite{survey} for a compilation of known results about this conjecture and other topics related to images of polynomials on algebras. This kind of problem was also studied for other algebras, not necessarily associative. For instance, when $A$ is the algebra of upper triangular matrices, $UT_n(K)$, or its subset of strictly upper triangular matrices, a complete solution is known under some conditions on the base field $K$ (see \cite{GargatedeMello, Wang_nxn, Fagundes}). If $A$ is the quaternion algebra, a complete solution was given in \cite{MalevQ}. In the non-associative setting, a complete solution is known for the octonion algebra \cite{MalevO} and for some classes of Jordan Algebras, including the (simple) algebra of a symmetric bilinear form \cite{MalevJ}.

A related conjecture is the so called Mesyan Conjecture (see \cite{Mesyan} and \cite{MesyanRestated}). It is a weaker version of Lvov-Kaplansy conjecture and it can be stated as follows

\begin{conjecture}[Mesyan conjecture]
    Let $K$ be a field $n\geq 2$ and $m\geq 1$ be integers, and let $f(x_1, \dots, x_m)$ be a nonzero multilinear polynomial in $K\langle x_1, \dots, x_m\rangle$. If $m\leq 2n-1$ then the image of $f$ on $M_n(K)$ contains $sl_n(K)$.
\end{conjecture}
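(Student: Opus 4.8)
The plan is to isolate the degenerate cases with a classical degree bound and then to force $sl_n(K)$ into the image by evaluating $f$ on matrix units, using a grading of $M_n(K)$ to organize the bookkeeping. The bound $m \le 2n-1$ is used through the Amitsur--Levitzki theorem: the least degree of a nonzero multilinear polynomial identity of $M_n(K)$ is $2n$, so for $m < 2n$ the polynomial $f$ is not an identity of $M_n(K)$ and hence $\mathrm{Im}(f)\neq\{0\}$. (For $m = 2n$ the standard polynomial $s_{2n}$ already violates the conclusion, so some such bound is unavoidable.) Recall also that the image of a multilinear polynomial is stable under scaling, $\lambda\,\mathrm{Im}(f) \subseteq \mathrm{Im}(f)$, and under $GL_n(K)$-conjugation; consequently, once one shows that $\mathrm{Im}(f)$ meets every conjugacy class contained in $sl_n(K)$, conjugation-stability upgrades this to $sl_n(K) \subseteq \mathrm{Im}(f)$, which is the desired conclusion (and which, a posteriori, shows $f$ is neither an identity nor a central polynomial, so no separate argument for those is needed).

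\emph{Matrix-unit walks.} Relabel the variables so that $x_1 x_2 \cdots x_m$ has nonzero coefficient in $f$. For a walk $r_0, r_1, \dots, r_m$ in $\{1, \dots, n\}$, substitute $x_i = e_{r_{i-1} r_i}$; each monomial $x_{\sigma(1)} \cdots x_{\sigma(m)}$ then evaluates either to $0$ or to a single matrix unit $e_{r_{\sigma(1)-1},\, r_{\sigma(m)}}$, according to whether consecutive factors chain up along the walk. If the $r_j$ are pairwise distinct, only $\sigma = \mathrm{id}$ survives and one obtains $c\,e_{r_0 r_m}$ with $c \neq 0$; rescaling one variable by $c^{-1}$ puts $e_{r_0 r_m}$ itself in $\mathrm{Im}(f)$. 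When $m \le n-1$ this can be arranged for every prescribed $r_0 \neq r_m$, so every off-diagonal matrix unit — hence, by conjugation, every rank-one nilpotent — lies in $\mathrm{Im}(f)$. The crucial difficulty appears right here: because $\mathrm{Im}(f)$ need not be additively closed, having all rank-one nilpotents in the image does \emph{not} give their linear span $sl_n(K)$; one must exhibit, \emph{as a value of} $f$, a matrix of each conjugacy class of $sl_n(K)$ (diagonalizable with prescribed eigenvalues summing to $0$, each nilpotent Jordan type, and mixtures), and it is in the range $n \le m \le 2n-1$ — where the walks are forced to revisit vertices and several permutations contribute simultaneously — that real work is required.

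\emph{Bringing in gradings, and the main obstacle.} Here I would try to exploit the graded viewpoint of this paper. Grade $M_n(K)$ by the cyclic group of order $n$ via $\deg e_{ij} = i - j \bmod n$, so that the degree-$0$ component is the space of diagonal matrices and each nonzero component consists of trace-zero matrices; an evaluation of $f$ on homogeneous arguments of degrees $g_1, \dots, g_m$ lands in the component of degree $g_1 + \cdots + g_m$, automatically trace zero when that sum is nonzero. The aim is to use $m \le 2n-1$ to keep the revisiting walks (of length at most $2n-1$) under control — the competing permutations now contribute, but tractably — and to combine this with the trace-zero criterion established earlier in the paper to show that the graded values of $f$, closed under conjugation and scaling, already meet every conjugacy class of $sl_n(K)$. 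I expect this last step — equivalently, the passage from ``$\mathrm{Im}(f)$ nonzero and non-central'' to ``$\mathrm{Im}(f) \supseteq sl_n(K)$'', which the non-additivity of $\mathrm{Im}(f)$ forbids one to reach by any spanning argument — to be the genuine obstacle; this is exactly the gap that leaves the Mesyan conjecture open for $m$ large relative to $n$. The grading should be read as an organizing device, and a clean way to peel off the trace-zero part, rather than as a shortcut past the combinatorics.
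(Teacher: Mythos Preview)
This statement appears in the paper as a \emph{conjecture}, not a theorem; the paper does not prove it. What the paper contributes is a reformulation: using the $\mathbb{Z}_n$-grading and the criterion (S1), it observes that for $m \le 2n-1$ a nonzero multilinear $f$ is neither an identity nor a central polynomial, hence fails (S1), and therefore Mesyan's conjecture would follow from the single implication ``$f$ does not satisfy (S1) $\Rightarrow$ $sl_n(K) \subseteq \mathrm{Im}(f)$''. That implication is then recorded as a new, still open, conjecture.

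Your proposal is thus not wrong so much as honestly incomplete, and the incompleteness coincides exactly with the open status of the problem. Your strategy and the paper's arrive at the same place: both use the Amitsur--Levitzki bound (and the analogous degree bound for central polynomials) to reduce to the case where $f$ is not central, both invoke the $\mathbb{Z}_n$-grading to organise evaluations on homogeneous elements, and both stop at the step ``$f$ not central $\Rightarrow sl_n(K) \subseteq \mathrm{Im}(f)$''. Your matrix-unit walk argument handles only the easy range $m \le n-1$, and you correctly diagnose that the non-additivity of $\mathrm{Im}(f)$ forbids any spanning argument in the remaining range; producing an actual value of $f$ in each $sl_n(K)$-conjugacy class is precisely the unresolved obstacle. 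The paper does not get past it either --- it merely isolates it as a clean conditional statement.
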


Such conjecture was proved for $m\leq 4$ (\cite{Mesyan, BuzinskiWinstanley, MesyanRestated}).

The theory of image of polynomials on algebras is strongly connected with the theory of algebras with polynomial identities (PI-algebras). For instance a polynomial $f$ is a polynomial identity (PI) of $A$ if  its image is $\{0\}$, and $f$ is a central polynomial for $A$ if the image of $f$ is contained in the center of $A$.

Also, the theory of polynomial identities provides interesting results on which one can rely to study images of polynomials. For instance, in the solution of the case $n=2$ of Lvov-Kaplansky conjecture, a key argument is based on the fact that the algebra of generic matrices is a domain. Recall that the algebra of generic matrices is an algebra generated by matrices in which the entries are distinct variables (see \cite[Chapter 7]{Drensky}). It is well-known that such algebra is isomorphic to the quotient algebra $\frac{K\langle X \rangle}{Id(M_n(K))}$, where $Id(A)$ denotes the ideal of polynomial identities of an algebra $A$, i.e., the algebra of polynomials module the identities of $A$.

An usual approach in studying polynomial identities of algebras, is the use of gradings, specially after the seminal work of Kemer \cite{Kemer}, where gradings were used to give a positive solution to the Specht problem in characteristic zero. Gradings provide an interesting approach to study identities, once one usually reduces the problem of evaluating elements in the whole algebra to evaluating elements in some particular vector subspaces. For instance, one can show that if two algebras satisfy the same graded polynomial identities, then they satisfy the same ordinary identities.

Following this line of research, it is a natural step in studying images of polynomials, to consider images of graded polynomials on graded algebras. This was done recently in the papers \cite{CentronedeMello, PlamenPedro} for full and upper triangular matrices. In this case, one needs to work with the so called \emph{graded polynomials} (see \cite{CentronedeMello}).

Although in the present paper we  are still considering gradings, our approach here is somewhat different to the above mentioned papers. We will use gradings and images of multilinear polynomials, to obtain results about ordinary polynomial identities and central polynomials, and to present an equivalent statement to the Lvov-Kaplansky conjecture.

The paper is organized as follows: in section 2 we present the preliminary concepts and results needed in the paper. In section 3, we present an statement (Theorem \ref{identities}) that gives necessary and sufficient conditions for a multilinear polynomial $f$ to be an identity for $M_n(K)$. In section 4, we present necessary and sufficient conditions for a multilinear polynomial to be a central polynomial and we present an equivalence to the Lvov-Kaplansky conjecture and in section 5, we give some applications of our results.
\section{Preliminaries}

Let $K$ be a field and $G$ be a group (with multiplicative notation). We say a $K$-algebra $A$ is a $G$-graded algebra, if there exist subspaces $A_g$, for each $g\in G$ such that 
\[A = \oplus_{g\in G}A_g\] 
and  $A_gA_h\subseteq A_{gh}$ for each $g, h\in G$. The elements of the subspace $A_g$ are called homogeneous of degree $g$.

Gradings on a matrix algebras $A=M_n(K)$ have been completely classified if $K$ is a an algebraically closed field of characteristic zero. Essentially, they can be presented as a tensor product of a matrix algebra with a kind of grading called \emph{elementary} and a graded division algebra \cite{BahturinZaicev}. 
A particular kind of elementary grading on $M_n(K)$ is the so called Vasilovsky grading over the group $\mathbb{Z}_n$. In such grading, the component $g$ is the subspace spanned by the matrices $E_{i,j}$ such that $j-i = g $ in $\mathbb{Z}_n$. Here $E_{i,j}$ denotes the matrix with 1 in entry $(i,j)$ and 0 elsewhere (if $i$ or $j\not \in \{1, \dots, n\}$, we consider their representatives module $n$). 
So for the Vasilovsky grading, the component $t$ is as below

\[\left(
    \begin{array}{ccccccc}
      0           & \cdots & 0       & a_{1,t+1} & 0         & \cdots & 0            \\
      0           & \cdots & 0       & 0       & a_{2,t+2} & \cdots & 0            \\
      \vdots      & \ddots & \vdots  & \vdots  & \vdots    & \ddots & \vdots       \\
      0           & \cdots & 0       & 0       & 0         & \cdots & a_{n-t,n}    \\
      a_{n-t+1,1} & \cdots & 0       & 0       & 0         & \cdots & 0            \\
      0           & \ddots & 0       & 0       & 0         & \cdots & 0            \\
      0           & \cdots & a_{n,t} & 0       & 0         & \cdots & 0            \\
    \end{array}
  \right),
\]

The graded polynomial identities of matrices with this grading have been described in \cite{Vasilovsky} for fields of zero characteristic and in \cite{Azevedo} for infinite fields. Also, the central polynomials  were described in \cite{Brandao}.

When dealing with polynomial identities over fields of characteristic zero, the multilinear polynomials play an important role. Namely, the ideal of identities of a given algebra $A$ is generated (as a T-ideal) by its multilinear polynomial identities. In particular, the sequence of codimensions of a given PI-algebra provides an important way to study (asymptotically) identities of a given algebra or variety of algebras.

A polynomial $f(x_1, \dots, x_m)\in K\langle X \rangle$ is called multilinear if it can be written as
\[f(x_1, \dots, x_m) = \sum_{\sigma\in S_m}\alpha_\sigma x_{\sigma(1)} \cdots x_{\sigma(m)},\]
for some $\alpha_{\sigma}\in K$. Here $S_m$ stands for the symmetric group on $\{1, \dots, m\}$

Since the Lvov-Kaplansky conjecture asks if the image of a multilinear polynomial is a vector subspace, it is important to know what kind of structure such a set has.

One can easily see that the image of a polynomial $f$ on an algebra $A$ is invariant under automorphisms of such algebra. Indeed, one just need to notice that if $\varphi$ is an automorphism of $A$, for any $a_1, \dots, a_m\in A$ one has 
\[\varphi(f(a_1, \dots, a_m)) = f(\varphi(a_1), \dots, \varphi(a_m)).\]
In the case of $A=M_n(K)$ this is the same as saying that the image of a polynomial is invariant under conjugation.

Also, if the polynomial $f$ is linear in one of its variables, then the image of $f$ is closed under scalar multiplication.

The notion of \emph{invariant cone} was defined in \cite{K-BMR} for matrix algebras. We say that a subset of $A$ is an \emph{invariant cone} of $A$ if it is closed under conjugation and scalar multiplication.

One such cone is said \emph{irreducible} if it contains no proper invariant cone.

Observe that if $S$ is an irreducible invariant cone in $A$, then if the image of $f$ intersects $S$ nontrivially, then $S$ is contained in the image of $f$.

\section{A new approach to polynomial identities of matrices}

In this section we present a new approach to study polynomial identities on matrices, that relies on the fact that the image of a polynomial is invariant under endomorphisms of algebras.

In order to present our main result, we consider $A=\oplus_{g\in \mathbb{Z}_n}A_g$ to be the algebra of $n\times n$ matrices over $K$ endowed with the Vasilovsky grading and we consider the following statement for a multilinear polynomial $f\in K\langle X \rangle$.

\begin{enumerate}
    \item [(S0)] If $a_1, \dots, a_m\in M_n(K)$ are homogeneous matrices satisfying $\sum _{i=1}^m\deg(a_i)=0$ then $f(a_1,\dots, a_m)=0$.
\end{enumerate}

\begin{theorem}\label{identities}
    Let  $f(x_1,\dots, x_m)  \in K\langle X \rangle$ be a multilinear polynomial. Then $f$ is a polynomial identity for $M_n(K)$ if and only if $f$ satisfies \emph{(S0)}.
\end{theorem}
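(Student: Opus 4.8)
The forward implication is immediate: if $f \in Id(M_n(K))$ then $f$ vanishes on every tuple of matrices, hence in particular on every tuple of homogeneous matrices whose degrees sum to $0$, which is exactly (S0). So the content lies in the converse, and the plan is to prove it by induction on the number $m$ of variables, handling the ranges $m \le n$ and $m > n$ by rather different arguments.

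For the base of the induction, $m \le n$, I would in fact prove the stronger statement that (S0) forces every coefficient $\alpha_\sigma$ of $f$ to vanish, i.e. $f = 0$. The idea is to evaluate $f$ on matrix units arranged along a single directed $m$-cycle so that, among the $m!$ monomials, only those indexed by one cyclic subgroup of $S_m$ contribute. Concretely, set $b_\ell = E_{\ell,\ell+1}$ for $1 \le \ell < m$ and $b_m = E_{m,1}$ (legitimate matrix units of $M_n(K)$ since $m \le n$); these are homogeneous and their degrees sum to $0$ in $\mathbb{Z}_n$, and the same is true of $(b_{\tau(1)},\dots,b_{\tau(m)})$ for every $\tau \in S_m$. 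A short computation with matrix units shows that $b_{\rho(1)}\cdots b_{\rho(m)}$ is nonzero exactly when $\rho$ lies in the cyclic subgroup generated by $c = (1\,2\,\cdots\,m)$, in which case it equals a diagonal matrix unit $E_{v,v}$ with $v$ depending injectively on $\rho$. Hence, writing out $f(b_{\tau(1)},\dots,b_{\tau(m)}) = \sum_{a=0}^{m-1}\alpha_{\tau^{-1}c^{a}}E_{a+1,a+1}$ and using that $E_{1,1},\dots,E_{m,m}$ are linearly independent (as $m\le n$), (S0) forces $\alpha_{\tau^{-1}} = 0$ for every $\tau \in S_m$, i.e. $f = 0$.

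For the inductive step, $m > n$, the plan is to reduce any homogeneous evaluation of $f$ either to an instance of (S0) or to the induction hypothesis. By multilinearity it suffices to show $f(a_1,\dots,a_m) = 0$ whenever each $a_i$ is homogeneous of degree $g_i$. Among the $m+1$ partial sums $s_0 = 0$ and $s_j = g_1 + \cdots + g_j$ ($1 \le j \le m$), which live in $\mathbb{Z}_n$ and hence cannot all be distinct since $m+1 > n$, pick $i < j$ with $s_i = s_j$; then $S = \{i+1,\dots,j\}$ is nonempty and $\sum_{k\in S} g_k = 0$. If $S = \{1,\dots,m\}$ we are done by (S0). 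Otherwise, specializing $x_k \mapsto a_k$ for $k \in S$ produces a multilinear polynomial $\tilde f$ in $m - |S| < m$ variables, and the crucial point is that $\tilde f$ still satisfies (S0): adjoining the fixed homogeneous matrices $a_k$ ($k \in S$), whose degrees sum to $0$, to any homogeneous tuple of total degree $0$ again has total degree $0$, so (S0) for $f$ applies. By the induction hypothesis $\tilde f \in Id(M_n(K))$, whence $f(a_1,\dots,a_m) = \tilde f((a_k)_{k\notin S}) = 0$; since $f$ then vanishes on all homogeneous tuples, multilinearity gives $f \in Id(M_n(K))$.

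I expect the genuine obstacle to be the base range $m \le n$: there the pigeonhole mechanism of the inductive step fails, since a tuple of degrees whose sum is nonzero need not possess a nonempty zero-sum subset, so one really needs a self-contained argument, and arranging the evaluation points so that a single cyclic family of monomials survives is the heart of it. The remaining ingredients — the reductions via multilinearity, and the inheritance of (S0) when variables are specialized to homogeneous matrices — are routine. It is also worth noting that no hypothesis on $K$ is needed: both the property $f \in Id(M_n(K))$ and property (S0) amount to systems of linear equations in the coefficients $\alpha_\sigma$ with integer coefficients, and every evaluation used above takes place over the prime field.
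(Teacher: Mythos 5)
Your forward direction and your base case $m\le n$ are both correct: the cyclic arrangement $E_{1,2},\dots,E_{m-1,m},E_{m,1}$ does force every product $b_{\rho(1)}\cdots b_{\rho(m)}$ with $\rho\notin\langle c\rangle$ to vanish and sends the surviving ones to distinct diagonal units, so (S0) kills all coefficients and even yields the stronger conclusion $f=0$ in that range (this is essentially the classical staircase argument). The genuine gap is in the inductive step, which you describe as routine but which is exactly where the proof breaks. When you specialize $x_k\mapsto a_k$ for $k\in S$, the resulting object $\tilde f$ is \emph{not} a multilinear polynomial in $K\langle X\rangle$: the frozen variables occur interspersed among the free ones inside each monomial $x_{\sigma(1)}\cdots x_{\sigma(m)}$, so $\tilde f$ is a multilinear \emph{generalized} polynomial, with fixed matrix coefficients sitting between and around the remaining variables. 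The induction hypothesis is a statement about ordinary multilinear polynomials and does not apply to it. Nor can you repair this by strengthening the induction hypothesis to generalized multilinear polynomials satisfying (S0), because that stronger statement is false: $h(x)=E_{1,1}\,x\,E_{n,n}$ vanishes on every homogeneous $x$ of degree $0$ (every diagonal matrix), yet $h(E_{1,n})=E_{1,n}\neq 0$. So the observation that $\tilde f$ inherits (S0) does not let you conclude that $\tilde f$ vanishes on the tuple $(a_k)_{k\notin S}$, whose degrees sum to something possibly nonzero --- and that is precisely the assertion that needs proof.

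For comparison, the paper closes this case without induction. Expanding $f(b_1,\dots,b_m)$ over homogeneous components and deleting the zero-sum terms by (S0), every value of $f$ lies in $\bigoplus_{g\neq 0}A_g$, i.e.\ is a hollow (zero-diagonal) matrix; since $Im(f)$ is invariant under conjugation and, by Fillmore's theorem, every nonzero hollow matrix is conjugate to a matrix with nonzero $(1,1)$ entry, the image must be $\{0\}$. Some global argument of this kind, exploiting conjugation invariance of the image (or another device), is needed to handle homogeneous tuples of nonzero total degree; your reduction does not supply one.
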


\begin{proof}

The ``only if" part is trivial. 

Let us assume $f$ satisfies (S0), that is, for any homogeneous $a_1,\cdots,a_m$ satisfying $\sum_{i=1}^m \deg(a_i)= 0$ we have $f(a_1, \dots, a_m)=0$.

Taking arbitrary elements $b_1,\dots,b_m\in M_n(K)$ and writing them as \[b_j=\displaystyle \sum_{i\in \mathbb{Z}_n}a_i^{(j)}, \textrm{ for each } j\in \{1, \dots, m\},\]
with $\deg(a_i^{(j)})=i$, for $j\in \{1, \dots, m\}$ and $i\in \mathbb{Z}_n$, the multilinearity of $f$ implies  that we may open the brackets to obtain
    \begin{align*}
        f(b_1,\dots,b_m) & =\sum_{i_j\in \mathbb{Z}_n}f(a^{(1)}_{i_1}, \dots, a^{(m)}_{i_m})\\
                         & = \sum_{i_1+\cdots +i_m=\overline{0}}f(a^{(1)}_{i_1}, \dots, a^{(m)}_{i_m}) + \sum_{i_1+\cdots +i_m\neq \overline{0}}f(a^{(1)}_{i_1}, \dots, a^{(m)}_{i_m})
    \end{align*}

Then we obtain \[f(b_1,\dots,b_m) = \sum_{i_1+\cdots +i_m\neq \overline{0}}f(a^{(1)}_{i_1}, \dots, a^{(m)}_{i_m})\]
The above means that the image of $f$ on $M_n(K)$ is a subset of the set of zero diagonal matrices (also known as \emph{hollow matrices}). But Theorem 2 of \cite{Fillmore} (see also \cite{Fillmorerestated}) implies that if $a\in M_n(K)$ ($n\geq 2$) is nonzero and has zero diagonal, then it is conjugated to a matrix whose $(1,1)$ entry is nonzero. 
Since the image of $f$ is invariant under conjugation, it follows that $f(b_1,\dots,b_m)=0$, and $p$ is a polynomial identity for $M_n(K)$.
\end{proof}

Notice that the above theorem provides a weaker condition to verify whether a multilinear polynomial is an identity for $M_n(K)$. If $f$ is a multilinear polynomial, in order to verify it is an identity for $M_n(K)$, it is enough to verify it vanishes under the evaluation of $f$ in a set which generates $M_n(K)$. As a consequence, we have

\begin{corollary}
    Let $f(x_1, \dots, x_m)\in K\langle X \rangle$ be a multilinear polynomial. Then $f$ is a polynomial identity for $M_n(K)$ if and only if for any set of homogeneous elements $a_1, \dots, a_k$ which spans $M_n(K)$, $f(a_{i_1}, \dots, a_{i_m}) = 0$ whenever $i_j\in \{1, \dots, k\}$ and  $\sum_{j=1}^m\deg(a_{i_j})=0$.
\end{corollary}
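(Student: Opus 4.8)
The plan is to deduce this corollary directly from Theorem \ref{identities}, since the corollary is essentially a ``basis version'' of condition (S0). The forward direction is immediate: if $f$ is a polynomial identity for $M_n(K)$, then every evaluation vanishes, in particular the ones described. The substance is in the converse.

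So suppose $a_1, \dots, a_k$ are homogeneous elements (with respect to the Vasilovsky grading) spanning $M_n(K)$, and that $f(a_{i_1}, \dots, a_{i_m}) = 0$ whenever $\sum_{j=1}^m \deg(a_{i_j}) = \overline{0}$. I want to verify that $f$ satisfies (S0) and then invoke Theorem \ref{identities}. Let $b_1, \dots, b_m$ be homogeneous matrices with $\sum_{i=1}^m \deg(b_i) = \overline{0}$. Each $b_j$ is homogeneous of some degree $d_j \in \mathbb{Z}_n$, and since the $a_i$ span $M_n(K)$ and the grading is a direct sum decomposition, the homogeneous component $A_{d_j}$ is spanned by those $a_i$ that lie in degree $d_j$; hence $b_j = \sum_i \lambda_{i}^{(j)} a_i$ where the sum runs only over indices $i$ with $\deg(a_i) = d_j$. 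Expanding $f(b_1, \dots, b_m)$ by multilinearity gives a $K$-linear combination of terms $f(a_{i_1}, \dots, a_{i_m})$ in which every summand has $\deg(a_{i_j}) = d_j$, hence $\sum_j \deg(a_{i_j}) = \sum_j d_j = \overline{0}$; by hypothesis each such term is zero, so $f(b_1, \dots, b_m) = 0$. Thus (S0) holds, and Theorem \ref{identities} yields that $f$ is a polynomial identity for $M_n(K)$.

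The one point requiring a little care is the claim that a homogeneous component $A_{d}$ is spanned by the subset of the $a_i$'s lying in $A_d$. This follows from the general fact that if a graded vector space $V = \oplus_g V_g$ is spanned by homogeneous elements $v_1, \dots, v_k$, then each $V_g$ is spanned by those $v_i$ lying in $V_g$: given $v \in V_g$, write $v = \sum_i \mu_i v_i$ and project onto the degree-$g$ component, which kills every $v_i$ not in $V_g$ and fixes the rest. There is no real obstacle here — the corollary is a routine reformulation, the only mildly delicate step being this observation that spanning by homogeneous elements passes to each graded component, and even that is standard linear algebra with the grading.
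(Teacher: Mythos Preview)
Your proof is correct and matches the paper's intended argument: the paper states the corollary without a detailed proof, simply noting it is a consequence of Theorem \ref{identities} and multilinearity, which is precisely the route you take. Your additional remark that a homogeneous spanning set restricts to a spanning set of each graded component is the only nontrivial observation needed, and you handle it cleanly.
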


The above theorem and more specifically its corollary may be a useful tool when considering computational approaches to polynomial identities (for instance, as in \cite{Bondari}), since it reduces the computational effort to verify if some multilinear polynomial is an identity.

\begin{remark}
    An analogous result (with completely similar proof) holds when considering $M_n(K)$ endowed with any elementary $G$-grading by an abelian group $G$, whose neutral component is the set of diagonal matrices. 
\end{remark}

\section{An equivalence to the Lvov-Kaplansky Conjecture}

Let $f(x_1,\dots,x_m)\in K\langle X \rangle$ be a multilinear polynomial and let $A=\oplus_{g\in \mathbb{Z}_n}A_g$ be  the algebra of $n\times n$ matrices over $K$ endowed with the Vasilovsky grading. Let us consider the following statements concerning the image of $f$ on $A$:

\begin{enumerate}
    \item [(S1)] If $a_1, \dots, a_m\in M_n(K)$ are homogeneous matrices satisfying $\sum_{i=1}^m \deg(a_i)\neq 0$ then $f(a_1,\dots, a_m)=0$.
    \item [(S2)] If $a_1, \dots, a_m\in M_n(K)$ are homogeneous matrices satisfying $\sum _{i=1}^m\deg(a_i)=0$ then $tr(f(a_1,\dots, a_m))=0$.
\end{enumerate}

In this section we show that statements as the above can be used to have a better understanding of the image of $f$. In particular, the above are useful to characterize central polynomials and polynomial identities for matrix algebras.

We recall that we denote by $K$ the set of scalar matrices. In particular, $Im(f)\subseteq K$ means that $f$ is a central polynomial for $M_n(K)$.

\begin{lemma}\label{S1S2}
Let $f(x_1,\dots,x_m)\in K\langle X \rangle$ be a multilinear polynomial. Then 
    \begin{enumerate}
        \item $Im(f)\subseteq K$ if and only if $f$ satisfies \emph{(S1)}.
        \item $Im(f)\subseteq sl_n(K)$ if and only if $f$ satisfies  \emph{(S2)}.
    \end{enumerate}
\end{lemma}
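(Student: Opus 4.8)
The plan is to mimic the structure of the proof of Theorem \ref{identities}, exploiting the decomposition of an arbitrary tuple of matrices into homogeneous components and the multilinearity of $f$, together with the fact that $\mathrm{Im}(f)$ is invariant under conjugation.

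For item (1), the ``if'' direction is the heart of the matter. Assuming (S1), I would take arbitrary $b_1,\dots,b_m\in M_n(K)$, write each $b_j=\sum_{i\in\mathbb{Z}_n}a_i^{(j)}$ with $\deg(a_i^{(j)})=i$, and expand $f(b_1,\dots,b_m)$ by multilinearity into a sum over all choices of homogeneous components. By (S1) every summand with $\sum_j \deg(a_{i_j}^{(j)})\neq\overline0$ vanishes, so $f(b_1,\dots,b_m)$ is a sum of terms each lying in the homogeneous component of degree $\overline0$, i.e. in the diagonal $A_{\overline0}$. Hence $\mathrm{Im}(f)$ consists of diagonal matrices. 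Now I invoke conjugation invariance: a diagonal matrix that is not scalar has two distinct eigenvalues, hence is conjugate to a matrix with a nonzero off-diagonal entry, contradicting that $\mathrm{Im}(f)$ consists of diagonal matrices unless every element of $\mathrm{Im}(f)$ is scalar. Therefore $\mathrm{Im}(f)\subseteq K$. The converse is immediate: if $f$ is central, then for homogeneous $a_i$ with $\sum\deg(a_i)=g\neq\overline0$, the element $f(a_1,\dots,a_m)$ is both scalar and homogeneous of degree $g$, and since the only scalar matrix in a nonneutral component is $0$ (as $n\geq 2$), it must vanish.

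For item (2), assume (S2). Given arbitrary $b_1,\dots,b_m$, expand as above. The trace of $f(b_1,\dots,b_m)$ is the sum of the traces of the homogeneous summands; summands of nonzero degree are strictly ``hollow'' in the relevant sense (they lie in $A_g$ with $g\neq\overline0$, whose matrices have zero diagonal, hence zero trace), and summands of degree $\overline0$ have zero trace by (S2). Thus $\mathrm{tr}(f(b_1,\dots,b_m))=0$ for all evaluations, i.e. $\mathrm{Im}(f)\subseteq sl_n(K)$. The converse is trivial since $sl_n(K)$ is defined by vanishing of the trace, and in particular the trace vanishes on any homogeneous evaluation with $\sum\deg(a_i)=\overline0$.

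The main obstacle is the ``if'' direction of (1): one must rule out the possibility that $\mathrm{Im}(f)$ contains a nonscalar diagonal matrix. The key point is that such a matrix, having at least two distinct diagonal entries, is conjugate to one with a nonzero entry off the diagonal; since $\mathrm{Im}(f)$ is conjugation-invariant and we have just shown it sits inside the diagonal, this forces every element of $\mathrm{Im}(f)$ to be scalar. (One can phrase this cleanly: the only conjugation-invariant subset of the diagonal matrices is the set of scalar matrices, for $n\geq 2$.) Everything else is a routine repetition of the bookkeeping already carried out in Theorem \ref{identities}.
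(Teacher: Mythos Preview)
Your proposal is correct and follows essentially the same approach as the paper: expand by multilinearity into homogeneous summands, use (S1) (resp.\ (S2)) to reduce to diagonal (resp.\ trace-zero) values, and in part (1) invoke conjugation invariance to force scalar values. Your write-up is in fact slightly more explicit than the paper's (e.g.\ you spell out why the converse of (1) holds and why nonneutral homogeneous components have zero trace), but the argument is the same.
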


\begin{proof}

\begin{enumerate}
    \item 
The proof is similar to the proof of Theorem \ref{identities}. Again, the ``only if" part is trivial.

Assume $f$ satisfies {(S1)}, that is,  for any homogeneous $a_1,\cdots,a_m$ satisfying $\sum_{i=1}^m \deg(a_i)\neq 0$ we have $f(a_1,\cdots,a_m)=0$. Let $b_1,\cdots, b_m\in M_n(K)$ and write them as 
\[b_j=\displaystyle \sum_{i\in \mathbb{Z}_n}a_i^{(j)},\]
with $\deg(a_i^{(j)})=i$, for $j\in \{1, \dots, m\}$ and $i\in \mathbb{Z}_n$.
Since $f$ is multilinear
\begin{align*}
    f(b_1,\cdots,b_m)&=\displaystyle \sum_{i_1+\cdots+i_m=0}f(a_{i_1}^{(1)},\cdots,a_{i_m}^{(m)}) +\displaystyle \sum_{i_1+\cdots+i_m\neq 0}f(a_{i_1}^{(1)},\cdots,a_{i_m}^{(m)})\\
    &=\displaystyle \sum_{i_1+\cdots+i_m=0}f(a_{i_1}^{(1)},\cdots,a_{i_m}^{(m)}).
\end{align*}

This would imply that $Im(f)$ lies in the subset of diagonal matrices. But it is well know that if a diagonal matrix is not scalar, then it is conjugated to a nondiagonal matrix, which cannot occur, since $Im(f)$ is invariant under conjugation. As a consequence, $Im(f)\subseteq K$, i.e., $f$ is a central polynomial.

\item  Again, the only if part is trivial.

Let us now assume that $f$ satisfies {(S2)}, that is, for any homogeneous $a_1,\cdots,a_m$ satisfying $\sum_{i=1}^m \deg(a_i)= 0$ we have $tr(f(a_1, \dots, a_m))=0$.

Again, taking arbitrary $b_1,\cdots, b_m\in M_n(K)$ and writing them as \[b_j=\displaystyle \sum_{i\in \mathbb{Z}_n}a_i^{(j)},\]
with $\deg(a_i^{(j)})=i$, for $j\in \{1, \dots, m\}$ and $i\in \mathbb{Z}_n$, the multilinearity of $f$ implies  that \[tr(f(b_1,\dots, b_m)) = tr(\sum_{i_1+\cdots +i_m\neq \overline{0}} f(a^{(1)}_{i_1}, \dots, a^{(m)}_{i_m}))=0.\] This means $Im(f)\subseteq sl_n(K)$.
\end{enumerate}
\end{proof}

From now on, we assume $K$ to be a field of characteristic $0$ or $p$ such that $p$ does not divide $n$.

Putting together the above lemma and Theorem \ref{identities}, we obtain

\begin{corollary}
    The multilinear polynomial $f(x_1,\dots, x_m)$ satisfies \emph{(S0)} if and only if $f$ satisfies both statements \emph{(S1)} and \emph{(S2)}. In particular, $f$ is a polynomial identity for $M_n(K)$ if and only if $f$ satisfies both \emph{(S1)} and \emph{(S2)}. Also, $Im(f)=K$ if and only if $f$ satisfies (S1) and do not satisfy \emph{(S2)}.
\end{corollary}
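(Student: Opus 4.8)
The plan is to derive all three equivalences formally from Theorem~\ref{identities} and Lemma~\ref{S1S2}, the only extra ingredient being the standing hypothesis on $\mathrm{char}\,K$, which is exactly what guarantees that the scalar matrices meet $sl_n(K)$ only in $0$.

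First I would settle ``(S0) $\iff$ (S1) and (S2)''. For the direct implication, if $f$ satisfies (S0) then Theorem~\ref{identities} says $f$ is a polynomial identity of $M_n(K)$, so $Im(f)=\{0\}$; since $\{0\}\subseteq K$ and $\{0\}\subseteq sl_n(K)$, Lemma~\ref{S1S2} yields both (S1) and (S2). For the converse, assume (S1) and (S2); Lemma~\ref{S1S2} then gives $Im(f)\subseteq K\cap sl_n(K)$. A scalar matrix $\lambda I_n$ has trace $n\lambda$, and since $\mathrm{char}\,K$ does not divide $n$ the equality $n\lambda=0$ forces $\lambda=0$; hence $K\cap sl_n(K)=\{0\}$, so $Im(f)=\{0\}$ and $f$ satisfies (S0). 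Combining this with Theorem~\ref{identities} immediately gives the ``in particular'' assertion that $f$ is a polynomial identity for $M_n(K)$ precisely when it satisfies (S1) and (S2).

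Next I would handle the characterization of $Im(f)=K$. If $Im(f)=K$, then $Im(f)\subseteq K$ gives (S1) by Lemma~\ref{S1S2}; and (S2) must fail, since otherwise $Im(f)\subseteq sl_n(K)$ would force $Im(f)\subseteq K\cap sl_n(K)=\{0\}$, contradicting $K\neq\{0\}$. Conversely, suppose (S1) holds and (S2) fails. By Lemma~\ref{S1S2}, (S1) gives $Im(f)\subseteq K$, while the failure of (S2) gives $Im(f)\not\subseteq sl_n(K)$, so in particular $Im(f)\neq\{0\}$. Since $f$ is multilinear, $Im(f)$ is closed under scalar multiplication, and a nonzero subset of the one-dimensional space of scalar matrices that is closed under scalar multiplication must be the whole space; hence $Im(f)=K$.

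I do not expect any serious obstacle here: the argument is bookkeeping on top of the results already proved. The one delicate point — and the only place the proof would collapse if the hypothesis were dropped — is the identity $K\cap sl_n(K)=\{0\}$, which fails exactly when $p\mid n$ (there every scalar matrix has trace $0$, so a noncentral central polynomial would satisfy (S1) and (S2) without satisfying (S0)).
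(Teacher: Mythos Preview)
Your proof is correct and follows essentially the same route as the paper: the paper's proof is the one-line remark that everything follows from Theorem~\ref{identities}, Lemma~\ref{S1S2}, and the fact that $sl_n(K)\cap K=\{0\}$, and you have simply unpacked this in detail. The one small step you make explicit that the paper leaves implicit is invoking closure of $Im(f)$ under scalar multiplication to pass from $\{0\}\subsetneq Im(f)\subseteq K$ to $Im(f)=K$; this is stated in the preliminaries and is indeed needed, so your write-up is if anything cleaner.
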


\begin{proof}
    The proof follows straightforward from Lemma \ref{S1S2} and Theorem \ref{identities} and from the fact that $sl_n(K)\cap K = \{0\}$.
\end{proof}

Summarizing the results up to now, we have for a multilinear polynomial $f$:
\begin{itemize}
    \item $f$ satisfies (S1) and (S2) if and only if $f$ is a polynomial identity
    \item $f$ satisfies (S1) and do not satisfy (S2), if and only if the image of $f$ is $K$.
    \item $f$ satisfies (S2) and do not satisfy (S1), if and only if $f$ is not an identity and the image of $f$ lies in $sl_n(K)$.
\end{itemize}

Our hope is that the last case is equivalent to $Im(f)= sl_n(K)$ and that if $f$ does not satisfy neither (S1) nor (S2) than $Im(f)=M_n(K)$.

For now, we are not able to prove this, so we weaken this to consider the linear span of the image of $f$.

\begin{proposition}\label{notS1}
    The polynomial $f$ does not satisfy \emph{(S1)} if and only if the linear span of $Im(f)$ contains $sl_n(K)$.
\end{proposition}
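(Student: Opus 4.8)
The plan is to combine two facts: that the linear span of the image is invariant under conjugation, and that the conjugation‑invariant subspaces of $M_n(K)$ are completely understood. Write $V=\mathrm{span}(Im(f))$. Since $Im(f)$ is invariant under conjugation and conjugation by an invertible matrix acts linearly on $M_n(K)$, the set $V$ is a subspace of $M_n(K)$ invariant under conjugation. Under the standing hypothesis on $K$ (characteristic $0$, or characteristic $p\nmid n$) the only conjugation‑invariant subspaces of $M_n(K)$ are $\{0\}$, $K$, $sl_n(K)$ and $M_n(K)$; equivalently, $sl_n(K)$ is irreducible under conjugation and $M_n(K)=K\oplus sl_n(K)$. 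Granting this, and using that $E_{1,2}\in sl_n(K)\setminus K$ for $n\geq 2$, we get $sl_n(K)\subseteq V$ if and only if $V\neq\{0\}$ and $V\neq K$, i.e. if and only if $V\not\subseteq K$. Finally $K$ is a linear subspace, so $V\subseteq K$ if and only if $Im(f)\subseteq K$, which by item (1) of Lemma~\ref{S1S2} holds if and only if $f$ satisfies \emph{(S1)}. Putting these equivalences together yields exactly: $sl_n(K)\subseteq\mathrm{span}(Im(f))$ if and only if $f$ does not satisfy \emph{(S1)}.

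It is worth recording the direct, more ``graded'' reason behind one implication, which also explains the statement: if $f$ does not satisfy \emph{(S1)} there are homogeneous matrices $a_1,\dots,a_m$ with $d:=\sum_{i=1}^m\deg(a_i)\neq 0$ in $\mathbb{Z}_n$ and $f(a_1,\dots,a_m)\neq 0$; since $\mathbb{Z}_n$ is abelian, each monomial $a_{\sigma(1)}\cdots a_{\sigma(m)}$ lies in the component $A_d$, so $0\neq f(a_1,\dots,a_m)\in A_d$, and because $d\neq 0$ every matrix in $A_d$ is hollow, whence $A_d\subseteq sl_n(K)$. Thus $V=\mathrm{span}(Im(f))$ already meets $sl_n(K)$ nontrivially inside a single homogeneous component, and the classification of invariant subspaces promotes this to $sl_n(K)\subseteq V$. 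Conversely, if $sl_n(K)\subseteq V$ then $f$ cannot satisfy \emph{(S1)}, for otherwise item (1) of Lemma~\ref{S1S2} would give $Im(f)\subseteq K$, hence $sl_n(K)\subseteq V\subseteq K$, impossible for $n\geq 2$.

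The one genuinely substantive ingredient — and the step I would be most careful about — is the classification of conjugation‑invariant subspaces of $M_n(K)$, i.e. the irreducibility of $sl_n(K)$ under conjugation; this is where the hypothesis on $\mathrm{char}\,K$ enters. It is classical and may be cited, but it also has a short proof in the present style: let $W$ be a nonzero conjugation‑invariant subspace of $sl_n(K)$ and pick $0\neq w\in W$. If $w$ is diagonal it is non‑scalar (as $p\nmid n$ forces the only trace‑zero scalar matrix to be $0$), so $w_{ii}\neq w_{jj}$ for some $i\neq j$, and conjugating $w$ by $I+tE_{i,j}$ and subtracting $w$ gives $t(w_{jj}-w_{ii})E_{i,j}\in W$, hence $E_{i,j}\in W$ (take $t\neq 0$); if $w$ is not diagonal, conjugations by a permutation matrix and by diagonal matrices first isolate a nonzero multiple of some $E_{i,j}$ with $i\neq j$ inside $W$. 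Once some $E_{i,j}$ ($i\neq j$) lies in $W$, conjugating by permutation matrices gives all $E_{k,l}$ with $k\neq l$, and conjugating $E_{i,j}$ by $I+tE_{j,i}$ and removing the off‑diagonal part (already in $W$) gives $E_{i,i}-E_{j,j}\in W$, hence all such differences; these matrices span $sl_n(K)$, so $W=sl_n(K)$. Over an infinite field one generic choice of $t$ works at each step; the finitely many small finite fields can be checked separately, or one invokes the known (modular) representation theory. Everything else in the proof is routine bookkeeping with the Vasilovsky grading and Lemma~\ref{S1S2}.
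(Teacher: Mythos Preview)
Your argument is correct, but it takes a different route from the paper. The paper argues constructively: from the failure of \emph{(S1)} it extracts homogeneous $a_i$ with $f(a_1,\dots,a_m)\neq 0$ of nonzero degree, expands each $a_i$ in matrix units and opens brackets to find an actual matrix unit $E_{i,j}$ (with $i\neq j$) lying in $Im(f)$ itself; conjugation then puts every off-diagonal $E_{k,l}$ in $Im(f)$, so all hollow matrices lie in $\mathrm{span}(Im(f))$, and Fillmore's theorem (every trace-zero matrix is similar to a hollow one) finishes. You instead invoke the classification of conjugation-invariant subspaces of $M_n(K)$ and reduce everything to Lemma~\ref{S1S2}(1). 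Your approach is cleaner and makes the role of the characteristic hypothesis transparent, since that hypothesis is exactly what makes $sl_n(K)$ irreducible under conjugation. The paper's approach, on the other hand, yields a stronger intermediate fact---that some off-diagonal $E_{i,j}$ lies in $Im(f)$, not merely in its span---and this is reused later in the applications (e.g.\ in the alternative proof of Theorem~1 of \cite{K-BMRJPAA}); your argument, routed through $\mathrm{span}(Im(f))$, does not directly produce that byproduct.
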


\begin{proof}
    The ``if part" is trivial. 
    
    Let us assume (S1) is false. Then, there exists $a_1, \dots, a_m\in M_n(K)$ homogeneous such that $\sum_{i=1}^m \deg(a_i)\neq 0$ and $f(a_1, \dots, a_m)\neq 0$. In particular, the matrix $f(a_1, \dots, a_m)$ is nonzero and  nondiagonal. Writing each $a_i$ as a linear combination of matrix units and considering that $f$ is a multilinear polynomial, opening brackets, gives us that some matrix unit $E_{i,j}$ with $i\neq j$ lies in the image of $f$. Since all the $E_{i,j}$, with $i\neq j$ are conjugated to each other, we obtain that any zero diagonal matrix lies in the linear span of the image of $f$. Again, since each trace zero matrix is equivalent to a matrix with zero diagonal by Theorem 2  of \cite{Fillmore}, we obtain that the linear span of $Im(f)$ contains $sl_n(K)$.
\end{proof}

\begin{corollary}
The multilinear polynomial $f(x_1, \dots, x_m)$ do not satisfy \emph{(S1)} and satisfies \emph{(S2)} if and only if the linear span of $Im(f)$ is $sl_n(K)$.
\end{corollary}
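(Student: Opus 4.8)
The plan is to simply combine the two results established immediately before, namely Proposition \ref{notS1} and part (2) of Lemma \ref{S1S2}, together with the elementary observation that $sl_n(K)$ is a linear subspace of $M_n(K)$, so that it contains the linear span of any of its subsets.

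For the forward implication, I would assume that $f$ does not satisfy \emph{(S1)} and does satisfy \emph{(S2)}. From Proposition \ref{notS1}, the failure of \emph{(S1)} gives that the linear span of $Im(f)$ contains $sl_n(K)$. From Lemma \ref{S1S2}(2), the fact that $f$ satisfies \emph{(S2)} gives $Im(f)\subseteq sl_n(K)$; since $sl_n(K)$ is a subspace, the linear span of $Im(f)$ is also contained in $sl_n(K)$. The two inclusions together yield that the linear span of $Im(f)$ equals $sl_n(K)$.

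For the converse, I would assume the linear span of $Im(f)$ equals $sl_n(K)$. Then in particular this span contains $sl_n(K)$, so Proposition \ref{notS1} shows that $f$ does not satisfy \emph{(S1)}. Moreover $Im(f)$ is contained in its own linear span, hence in $sl_n(K)$, so Lemma \ref{S1S2}(2) shows that $f$ satisfies \emph{(S2)}. This completes the equivalence.

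I do not expect any genuine obstacle here: all the substantive work — the conjugation/Fillmore argument producing the off-diagonal matrix units in the span, and the trace computation after opening brackets — has already been carried out in Proposition \ref{notS1} and Lemma \ref{S1S2}. The only thing to be careful about is to phrase the argument in terms of the \emph{linear span} of $Im(f)$ throughout (rather than $Im(f)$ itself), and to invoke that $sl_n(K)$ being a subspace is what lets the one-sided inclusion $Im(f)\subseteq sl_n(K)$ pass to the span.
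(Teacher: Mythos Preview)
Your proof is correct and matches the paper's own approach exactly: the paper's proof is the single sentence ``It is a direct consequence of Proposition \ref{notS1} and Lemma \ref{S1S2} (2),'' and you have simply spelled out both directions of that deduction. There is nothing missing or different.
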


\begin{proof}
    It is a direct consequence of Proposition \ref{notS1} and Lemma \ref{S1S2} (2).
\end{proof}

\begin{theorem}
    The polynomial $f$ does not satisfy \emph{(S1)} and does not satisfy \emph{(S2)} if and only if the linear span of $Im(f)$ equals $M_n(K)$.
\end{theorem}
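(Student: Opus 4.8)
The plan is to deduce this from Proposition \ref{notS1} and Lemma \ref{S1S2}, together with the elementary fact that $sl_n(K)$ is a codimension-one subspace (a hyperplane) of $M_n(K)$, so that the only linear subspaces of $M_n(K)$ containing $sl_n(K)$ are $sl_n(K)$ itself and the whole of $M_n(K)$.

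I would first treat the ``if'' direction. Assume $f$ satisfies neither (S1) nor (S2). By Proposition \ref{notS1}, the linear span $V$ of $Im(f)$ contains $sl_n(K)$, so by the hyperplane observation either $V = sl_n(K)$ or $V = M_n(K)$. To rule out the first case, note that $V = sl_n(K)$ would give $Im(f) \subseteq sl_n(K)$, and then Lemma \ref{S1S2}(2) would force $f$ to satisfy (S2), contrary to hypothesis. Hence $V = M_n(K)$.

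For the converse, suppose the linear span of $Im(f)$ equals $M_n(K)$. If $f$ satisfied (S1), Lemma \ref{S1S2}(1) would give $Im(f) \subseteq K$, so the span of $Im(f)$ would sit inside the one-dimensional space of scalar matrices, which is absurd for $n \geq 2$. If $f$ satisfied (S2), Lemma \ref{S1S2}(2) would give $Im(f) \subseteq sl_n(K)$, so the span would sit inside the proper subspace $sl_n(K)$, again absurd. Hence $f$ satisfies neither (S1) nor (S2).

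I do not expect a genuine obstacle here: the statement is essentially a bookkeeping consequence of what has already been proved, and the only points used beyond Proposition \ref{notS1} and Lemma \ref{S1S2} are dimension counts ($\dim K < \dim M_n(K)$ and $\dim sl_n(K) < \dim M_n(K)$). The real difficulty, which the paper explicitly leaves open, is to replace ``linear span of $Im(f)$'' by $Im(f)$ itself; doing so would amount to showing that an invariant cone whose linear span is all of $M_n(K)$ must already equal $M_n(K)$, which is of the same order of difficulty as the Lvov--Kaplansky conjecture itself.
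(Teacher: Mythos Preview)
Your proof is correct, but the main direction (from ``$f$ fails (S1) and (S2)'' to ``span $= M_n(K)$'') is argued differently than in the paper. The paper proceeds constructively: from the failure of (S2) it extracts a diagonal matrix $b \in Im(f)$ with nonzero trace, rescales (using that $Im(f)$ is closed under scalar multiplication) so as to match the trace of an arbitrary $a \in M_n(K)$, and then writes $a = b + (a-b)$ with $a - b \in sl_n(K)$, the latter lying in the span by Proposition~\ref{notS1}. You instead use a hyperplane argument: Proposition~\ref{notS1} already gives $sl_n(K) \subseteq V$, and since $sl_n(K)$ has codimension one the only possibilities are $V = sl_n(K)$ or $V = M_n(K)$; you rule out the first via the contrapositive of Lemma~\ref{S1S2}(2). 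Your route is shorter and more structural, avoiding the explicit trace-matching step; the paper's is more hands-on and displays concretely how every matrix is reached as a sum of two elements, one in $Im(f)$ and one in its span. A minor quibble: your labeling of ``if'' and ``only if'' is reversed relative to the paper's convention (the paper calls the direction ``span $= M_n(K) \Rightarrow$ not-(S1) and not-(S2)'' the \emph{if} part), though the content is unaffected.
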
    

    \begin{proof}
        The ``if part" is trivial.
        
        Assuming (S2) is false, there exist $a_1, \dots, a_m\in M_n(K)$ homogeneous with $\sum_{i=1}^m\deg(a_i)=0$ and $tr(f(a_1, \dots, a_m))\neq 0$. This means that $Im(f)$ contains a nonscalar diagonal matrix. In particular, since $Im(f)$ is closed under scalar multiplication, we obtain $Im(f)$ contains diagonal matrices of arbitrary traces. 
        
        Let now $a\in M_n(K)$ and let $b\in Im(p)$ be a diagonal matrix such that $tr(a)=tr(b)$. Then $tr(a-b)=0$, and assuming $f$ does not satisfy (S1), Proposition \ref{notS1} asserts that $a-b$ lies in the linear span of image of $f$. Since $a=b+(a-b)$, we obtain that $a$ lies in the linear span of $Im(f)$.
    \end{proof}

\begin{remark}\label{considerations}
Summarizing again the results up to now, we have for a multilinear polynomial $f$:
\begin{enumerate}
    \item $f$ satisfies \emph{(S1)} and \emph{(S2)} if and only if $f$ is a polynomial identity
    \item $f$ satisfies \emph{(S1)} and do not satisfy \emph{(S2)}, if and only if the image of $f$ is $K$.
    \item $f$ satisfies \emph{(S2)} and do not satisfy \emph{(S1)}, if and only if the linear span of the image of $f$ is $sl_n(K)$.
    \item $f$ does not satisfy neither \emph{(S1)} nor \emph{(S2)}, if and only if the linear span of the image of $f$ is $M_n(K)$.
\end{enumerate}
\end{remark} 

Let us now discuss the above situation under the hypothesis that the Lvov-Kaplansky conjecture is true. 

Of course the Lvov-Kaplansky conjecture is true if and only if the linear span of $Im(f)$ equals $Im(f)$ for each multilinear polynomial $f$. In particular, by the above remark, if the Lvov-Kaplansky conjecture is true, then we may replace the linear span of $Im(f)$ by $Im(f)$ in the last two cases. And of course, if the last two cases are still true when one replaces the linear span of $Im(f)$ by $Im(f)$, then the Lvov-Kaplansky conjecture is also true. We have just proved the following theorem.

\begin{theorem}\label{equivalence}
    The Lvov-Kaplansky is true if and only if the following assertions hold:
    \begin{enumerate}
         \item If $f$ does not satisfy \emph{(S1)} and satisfies \emph{(S2)}, then $Im(f)=sl_n(K)$.
        \item If $f$ does not satisfy neither \emph{(S1)} nor \emph{(S2)}, then $Im(f)=M_n(K)$. 
    \end{enumerate}
\end{theorem}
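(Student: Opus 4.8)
The plan is to derive Theorem \ref{equivalence} as an essentially immediate consequence of the four-case classification recorded in Remark \ref{considerations}, so the real work is to make precise why ``linear span of $Im(f)$'' can be upgraded to ``$Im(f)$'' exactly when the Lvov--Kaplansky conjecture holds. First I would observe that, since $f$ is assumed multilinear and linear in (at least) one variable, $Im(f)$ is always closed under conjugation and under scalar multiplication; hence the Lvov--Kaplansky conjecture for $f$ is precisely the statement that $Im(f)$ is a vector subspace, which in turn is equivalent to the equality $\mathrm{span}_K(Im(f)) = Im(f)$. So the conjecture (for all multilinear $f$ on $M_n(K)$) is logically equivalent to: for every multilinear $f$, $Im(f) = \mathrm{span}_K(Im(f))$.

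Next I would run the forward implication. Assume Lvov--Kaplansky holds. If $f$ does not satisfy (S1) but satisfies (S2), then by Remark \ref{considerations}(3) the linear span of $Im(f)$ equals $sl_n(K)$; combining with $Im(f) = \mathrm{span}_K(Im(f))$ gives $Im(f) = sl_n(K)$, which is assertion (1). Likewise, if $f$ satisfies neither (S1) nor (S2), Remark \ref{considerations}(4) gives $\mathrm{span}_K(Im(f)) = M_n(K)$, and the same substitution yields $Im(f) = M_n(K)$, which is assertion (2). For the converse, suppose assertions (1) and (2) hold for every multilinear $f$; I would show $Im(f)$ is always $\{0\}$, $K$, $sl_n(K)$ or $M_n(K)$ by splitting into the four mutually exclusive and exhaustive cases according to whether $f$ satisfies (S1) and/or (S2). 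In the first case Remark \ref{considerations}(1) gives $Im(f)=\{0\}$; in the second, Remark \ref{considerations}(2) gives $Im(f)=K$; in the third and fourth cases, the hypotheses (1) and (2) give $Im(f)=sl_n(K)$ and $Im(f)=M_n(K)$ respectively. Since these four cases cover all multilinear polynomials, Lvov--Kaplansky follows.

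I should be careful about one bookkeeping point: the four items in Remark \ref{considerations} are phrased as biconditionals over ``(S1)'' and ``(S2)'', and I must check they genuinely partition the space of multilinear polynomials --- they do, since ``satisfies/does not satisfy (S1)'' $\times$ ``satisfies/does not satisfy (S2)'' is a $2\times 2$ partition, and the four items list exactly these four combinations. I would also note explicitly that the standing hypothesis on $K$ (characteristic $0$, or $p\nmid n$) is what makes $K\cap sl_n(K)=\{0\}$ and hence makes the earlier lemmas and corollaries available; this is already in force in the section, so I would simply invoke it rather than re-justify it.

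The honest assessment is that there is essentially no obstacle here: the theorem is a repackaging, and the statement even admits this in the sentence ``We have just proved the following theorem.'' The only thing to watch is precision of quantifiers --- ``the Lvov--Kaplansky conjecture is true'' means ``for all $n$ and all multilinear $f$'', and assertions (1),(2) must likewise be read as universally quantified over $f$ (with $n$ fixed, since the grading was fixed) --- and making sure the two directions of the equivalence are both spelled out, since the body text above only argues one direction in detail. So my proof would be short: set up the span-equals-image reformulation, then do the two-line forward argument and the four-case backward argument as above.
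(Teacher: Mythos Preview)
Your proposal is correct and follows essentially the same approach as the paper: reformulate the conjecture as $\mathrm{span}_K(Im(f))=Im(f)$, then read off both implications from the four-case classification in Remark \ref{considerations}. The paper's own argument is the short paragraph preceding the theorem, and your write-up simply spells out both directions more carefully; the only extra care you take (and rightly flag) is the quantifier bookkeeping over $f$ and $n$, which the paper leaves implicit.
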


Now assume  $f=f(x_1, \dots, x_m)$ multilinear and $m\leq 2n-1$. Then by Remark \ref{considerations} $f$ is not an identity nor a central polynomial. In particular, $f$ does not satisfy $(S1)$ and we obtain the following

\begin{theorem}
    The Mesyan conjecture is true if the following assertion holds:
    \begin{enumerate}
        \item If $f$ does not satisfy $(S1)$ then $Im(f) \supseteq sl_n(K)$.
    \end{enumerate}
\end{theorem}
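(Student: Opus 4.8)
The plan is to obtain Mesyan's conjecture as a direct consequence of the assumed assertion (1), using Lemma \ref{S1S2} to translate ``$f$ does not satisfy (S1)'' into a statement about the image of $f$.

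First I fix an arbitrary nonzero multilinear polynomial $f=f(x_1,\dots,x_m)$ with $m\le 2n-1$; the goal is $Im(f)\supseteq sl_n(K)$. By Lemma \ref{S1S2}(1), $f$ satisfies (S1) if and only if $Im(f)\subseteq K$, i.e.\ if and only if $f$ is a central polynomial of $M_n(K)$ in the wide sense that also includes the polynomial identities (for which $Im(f)=\{0\}\subseteq K$). Hence assertion (1) applies to $f$ as soon as we know that $f$ is \emph{not} such a central polynomial, that is, $Im(f)\not\subseteq K$.

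This last point is exactly what is recorded in the paragraph preceding the statement. On the one hand, a nonzero multilinear polynomial of degree $m<2n$ cannot be a polynomial identity of $M_n(K)$, since by the Amitsur--Levitzki theorem the minimal degree of an identity of $M_n(K)$ equals $2n$. On the other hand, $M_n(K)$ (with $n\ge 2$) admits no nonzero multilinear central polynomial of degree $m\le 2n-1$. Combining these two facts gives $Im(f)\not\subseteq K$, so $f$ does not satisfy (S1); assertion (1) then yields $Im(f)\supseteq sl_n(K)$, and as $f$ was arbitrary this is precisely Mesyan's conjecture.

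The only substantial ingredient is the classical fact that $M_n(K)$, $n\ge 2$, carries no nonzero multilinear central polynomial of degree $\le 2n-1$, and in particular its sharpness at degree exactly $2n-1$: the softer bound ``a central polynomial of $M_n(K)$ has degree at least $2n-1$'' follows at once from Amitsur--Levitzki, because a central polynomial $h$ of degree $d$ which is not an identity produces the nonzero identity $[h,x_{d+1}]$ of degree $d+1$, forcing $d+1\ge 2n$. So the main obstacle is confined to the extremal case $m=2n-1$; everything else in the argument is a formal combination of Lemma \ref{S1S2} with the assumed assertion, and since the relevant facts are already stated in the excerpt the proof is essentially a one-line invocation of assertion (1).
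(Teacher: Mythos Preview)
Your proof is correct and follows the same line as the paper: the paper's argument is the paragraph immediately preceding the theorem, where it is simply asserted that a nonzero multilinear $f$ with $m\le 2n-1$ is neither an identity nor a central polynomial for $M_n(K)$, hence fails (S1), after which assertion (1) gives $Im(f)\supseteq sl_n(K)$. Your write-up actually supplies more justification (Amitsur--Levitzki and the commutator trick, plus a clear flag of the extremal case $m=2n-1$) than the paper does for the non-centrality step.
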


The above gives rise to the following conjecture, which is stronger than Mesyan's and weaker than Lvov-Kaplansky's.

\begin{conjecture}
    If a multilinear polynomial $f(x_1, \dots, x_m)$ does not satisfy $(S1)$ then $Im(f)\supseteq sl_n(K)$.  
\end{conjecture}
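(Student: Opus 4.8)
The plan is to build on the concrete information already contained in the proof of Proposition~\ref{notS1}. If $f$ fails (S1), that argument produces matrix units $E_{p_1,q_1},\dots,E_{p_m,q_m}$ with $f(E_{p_1,q_1},\dots,E_{p_m,q_m})=\beta E_{u,w}$ for some $\beta\neq 0$ and $u\neq w$; since $Im(f)$ is a cone this yields $E_{u,w}\in Im(f)$, and since $Im(f)$ is invariant under conjugation and all off-diagonal matrix units are mutually conjugate, every $E_{i,j}$ with $i\neq j$ lies in $Im(f)$. The goal is to upgrade this to $sl_n(K)\subseteq Im(f)$. By conjugation-invariance of $Im(f)$ and Theorem~2 of \cite{Fillmore}, it suffices to realize every matrix with zero diagonal as a value of $f$ (each conjugacy class in $sl_n(K)$ contains such a representative); this is exactly the gap between what Proposition~\ref{notS1} gives, namely $sl_n(K)$ inside the \emph{linear span} of $Im(f)$, and the desired conclusion.

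First I would enlarge, inside a fixed homogeneous component $A_d$ with $d\neq 0$, the portion of $A_d$ known to lie in $Im(f)$. Conjugating a nonzero homogeneous value $v=\sum_i\lambda_i E_{i,i+d}\in A_d\cap Im(f)$ by a diagonal matrix keeps each argument in its component and multiplies $\lambda_i$ by $t_it_{i+d}^{-1}$, while conjugating by the cyclic permutation matrix cyclically shifts the coefficient vector; tracking the reachable coefficient vectors (the only constraint being that the multipliers have product $1$ around each cycle of $i\mapsto i+d$ in $\mathbb{Z}_n$) should place a controlled family of elements of $A_d$ in $Im(f)$. Next I would use multilinearity to pass from homogeneous to mixed values: replacing each argument $a_k$ by $a_k+b_k$ with $b_k$ homogeneous and expanding $f(a_1+b_1,\dots,a_m+b_m)$ into its $2^m$ homogeneous summands, the aim is to tune the $b_k$ so that all summands of unwanted degree cancel and a prescribed zero-diagonal matrix survives. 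A complementary route is induction on $n$ via the corner subalgebras $M_k(K)\oplus M_{n-k}(K)$, whose intersections with the Vasilovsky components are again transparent, reattaching the corner case through the matrix units $E_{i,j}\in Im(f)$.

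The decisive obstacle is the absence of additivity: $Im(f)$ is known only to be an invariant cone, and passing from ``$sl_n(K)$ lies in the linear span of $Im(f)$'' to ``$sl_n(K)\subseteq Im(f)$'' is itself an instance of the Lvov-Kaplansky phenomenon --- the present conjecture is strictly stronger than Mesyan's (settled only for $m\le 4$) and strictly weaker than Lvov-Kaplansky's (open). Concretely, the conjugation-and-scalar closure of a single matrix unit $E_{1,2}$ is merely the cone of rank-$\le 1$ nilpotent matrices, far from $sl_n(K)$, so no argument relying only on $Im(f)$ being an invariant cone that contains the $E_{i,j}$ can work; one must extract genuinely subspace-like behavior from the specific polynomial $f$. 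I expect the step that forces the $2^m$-fold expansion to collapse onto a prescribed target --- equivalently, the combinatorics of which linear combinations of matrix units can occur simultaneously as a single homogeneous value of $f$ --- to be where the plan stalls without a new idea, and I would first test the whole strategy on $n=3$ and on polynomials $f$ that are themselves homogeneous of fixed multidegree, where (S1) involves a single component and the bookkeeping is most tractable.
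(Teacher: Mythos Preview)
This statement is presented in the paper as a \emph{conjecture}, not a theorem; the paper gives no proof, only the remark that it is strictly stronger than Mesyan's conjecture and strictly weaker than Lvov--Kaplansky's. So there is no proof to compare your proposal against.

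Your write-up is not a proof either, and you are candid about this. Your diagnosis of the situation is accurate: Proposition~\ref{notS1} places every off-diagonal $E_{i,j}$ inside $Im(f)$, but the smallest invariant cone containing $E_{1,2}$ is only the set of rank-$\le 1$ nilpotent matrices, so cone-and-conjugation arguments alone cannot reach $sl_n(K)$. You correctly locate the missing ingredient as a way to force the multilinear expansion $f(a_1+b_1,\dots,a_m+b_m)$ to hit a prescribed zero-diagonal target, and you correctly note that this is precisely the subspace-versus-cone gap that makes the Lvov--Kaplansky circle of problems hard. The suggestion to first test the strategy on $n=3$ and on polynomials of fixed graded multidegree is reasonable, but be aware that even the Mesyan conjecture is open for $m\ge 5$, so a full resolution along these lines would be a substantial new result, not a routine extension of the paper's methods.
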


\section{Applications}

Now we give some applications of the results presented above. Our examples will be based on the results of \cite{CentronedeMello}. These are interesting examples showing that the knowledge of images of graded polynomials may be useful to understand images of ordinary polynomials.

First we will give an alternative proof for the following theorem of \cite{K-BMRJPAA}.

\begin{theorem}[Theorem 1 of \cite{K-BMRJPAA}]
   Let $f(x_1, \dots, x_m)$ be any multilinear polynomial evaluated on $n\times n$ matrices over an infinite field. Assume that $f$ is neither central nor PI. Then $Im(f)$ contains a matrix of the form  $\sum_{i=1}^nc_i E_{i,i+1}$, where $c_1\cdots c_n\neq 0$. When $char(K)$ is $0$ or prime to $n$, $Im(f)$ contains a matrix with eigenvalues $\{c,c\varepsilon, \dots, c\varepsilon^{n-1}\}$ for some $0\neq c \in K$.
\end{theorem}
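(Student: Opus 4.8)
The plan is to show that the hypotheses — $f$ is neither central nor a PI — translate, via the results of this section, into the failure of (S1), and then to exploit the Vasilovsky grading to locate a matrix of the desired shape directly in $Im(f)$. First I would observe that since $f$ is not a polynomial identity and not central, by Remark \ref{considerations} (equivalently, by Lemma \ref{S1S2}(1) together with the preceding corollary) $f$ does not satisfy (S1). Hence there exist homogeneous matrices $a_1, \dots, a_m \in M_n(K)$ with $d := \sum_{i=1}^m \deg(a_i) \neq \overline 0$ in $\mathbb{Z}_n$ and $f(a_1, \dots, a_m) \neq 0$. By homogeneity of the Vasilovsky grading, $f(a_1, \dots, a_m)$ is a nonzero element of the component $A_d$, i.e.\ a nonzero matrix supported on the $d$-th ``diagonal'' $\{E_{i,i+d} : i \in \mathbb{Z}_n\}$.

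Next I would upgrade this single evaluation to a whole parametrized family. Because each $a_k$ lies in some homogeneous component $A_{g_k}$, which is spanned by the matrix units $\{E_{i,i+g_k}\}$, I can conjugate simultaneously by a diagonal matrix $D = \mathrm{diag}(t_1, \dots, t_n)$: conjugation preserves each graded component and rescales $E_{i,j} \mapsto (t_i/t_j) E_{i,j}$. Since $Im(f)$ is conjugation-invariant and $f$ is multilinear, playing with such diagonal conjugations (and with scalar multiples, using that $f$ is linear in each variable) lets me produce, inside $Im(f)$, matrices of the form $\sum_{i} \lambda_i E_{i,i+d}$ with the scalars $\lambda_i$ running over a Zariski-dense set of tuples obtained from the original nonzero tuple by multiplying coordinates by arbitrary characters $t_i/t_{i+d}$ of the torus. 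The key point is that the original evaluation is nonzero, so at least one $\lambda_i \neq 0$, and the torus action is transitive enough on the support that, after also taking $K$-linear combinations of conjugates (here the infinite-field hypothesis enters, to run a Vandermonde/Zariski-density argument), I can arrange all $n$ coefficients to be simultaneously nonzero. Replacing $d$ by $1$ requires one more step: the cyclic permutation matrix conjugation, or more simply the observation that all the ``off-diagonals'' $A_d$ for $\gcd(d,n)=1$ are conjugate as graded pieces; when $\gcd(d,n)\neq 1$ one argues that a generic element of $A_d$ is still conjugate to one supported on the first superdiagonal with all coefficients nonzero (its characteristic polynomial is $t^n - (\prod \lambda_i)^{n/\gcd}$-type, so it is regular semisimple with the stated eigenvalue pattern). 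This yields a matrix $\sum_{i=1}^n c_i E_{i,i+1} \in Im(f)$ with $c_1 \cdots c_n \neq 0$.

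For the final assertion I would compute the eigenvalues of $M = \sum_{i=1}^n c_i E_{i,i+1}$ (indices mod $n$): its characteristic polynomial is $t^n - c_1 c_2 \cdots c_n$, so setting $c$ to be an $n$-th root of $c_1\cdots c_n$ and $\varepsilon$ a primitive $n$-th root of unity, the eigenvalues are exactly $\{c, c\varepsilon, \dots, c\varepsilon^{n-1}\}$; the hypothesis $\mathrm{char}(K) = 0$ or prime to $n$ guarantees these are distinct and that $\varepsilon$ exists in a suitable sense (one may pass to the algebraic closure to read off eigenvalues, as eigenvalues are conjugation invariants). Since $c_1 \cdots c_n \neq 0$ we get $c \neq 0$, completing the argument.

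The main obstacle I anticipate is the passage from ``some coefficient $\lambda_i$ is nonzero'' to ``all $n$ coefficients can be made simultaneously nonzero.'' A single diagonal conjugation only rescales; it cannot turn a zero coefficient into a nonzero one. The fix is to take $K$-linear combinations of several diagonal conjugates of the same evaluation $f(a_1,\dots,a_m)$ — legitimate because $Im(f)$ spans the appropriate space and, more carefully, because over an infinite field one can realize such combinations again as values of $f$ by absorbing the coefficients into the (homogeneous, hence freely rescalable) arguments. Making this step rigorous — i.e.\ exhibiting an explicit choice of diagonal matrices and scalars, or invoking a clean Zariski-density statement about the image of the torus-equivariant polynomial map $f|_{A_{g_1}\times\cdots\times A_{g_m}} \to A_d$ — is where the real work lies; everything else is linear algebra over the grading.
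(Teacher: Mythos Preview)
Your opening matches the paper: the failure of (S1) yields homogeneous $a_1,\dots,a_m$ with $d=\sum\deg(a_i)\neq 0$ and $f(a_1,\dots,a_m)\neq 0$ in $A_d$. From there the paper takes a cleaner route than you do. It first expands the $a_i$ in matrix units to realize some $E_{i,i+g}$ as a value of $f$ on matrix units; since all off-diagonal $E_{i,j}$ are conjugate (by permutation matrices, which send matrix units to matrix units), one gets for \emph{every} $h\neq 0$ an evaluation of $f$ on matrix units landing in $A_h$. Assigning $\deg(y_k)=\deg(b_k)$ turns $f(y_1,\dots,y_m)$ into a nonzero multilinear $\mathbb{Z}_n$-graded polynomial of degree $h$, and then the paper invokes Lemma~14 of \cite{CentronedeMello} as a black box: that lemma guarantees a nonsingular matrix $\sum_i c_i E_{i,i+h}$ in the graded image. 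Taking $h=1$ finishes. The eigenvalue statement is then immediate from the characteristic polynomial, as you say.

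Your alternative route has the gap you yourself flag, and your proposed fix does not close it. Diagonal conjugation of a fixed value $f(a_1,\dots,a_m)\in A_d$ only rescales the nonzero coordinates; it cannot populate a coordinate that started at zero. Your suggestion to take $K$-linear combinations of several diagonal conjugates and ``absorb the coefficients into the arguments'' is not legitimate: $Im(f)$ is an invariant cone, not a subspace, and a sum of two values of $f$ is in general not a value of $f$. What is actually needed is to vary the \emph{inputs} $a_k\in A_{g_k}$ (not just conjugate the output) and argue that each coordinate functional $c_i\circ f$ on $\prod_k A_{g_k}$ is a nonzero polynomial, so that over an infinite field all $c_i$ are simultaneously nonzero on a Zariski-open set; this is precisely the content of the cited lemma, and it does require an argument beyond torus conjugation. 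Separately, your reduction from degree $d$ to degree $1$ by conjugating the output runs into the $\gcd(d,n)$ issue you mention; the paper avoids this entirely by conjugating the matrix-unit \emph{inputs} first, so that the graded polynomial already has degree $1$ before the nonsingularity lemma is applied.
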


\begin{proof}
    From our hypothesis we have $Im(f)\not\subseteq K$. Then, by part (1) of Lemma \ref{S1S2}, we obtain that $f$ does not satisfy (S1). In particular, there exist $a_1, \dots, a_m$ homogeneous elements with $g=\sum_{i=1}^m \deg(a_i) \neq 0$ such that $f(a_1, \dots, a_m) \neq 0$. In particular, $E_{i,i+g}\in Im(f)$, for some $g\neq 0$ in $\mathbb{Z}_n$. Since all $E_{i,j}$ with $i\neq j$ are conjugated with each other, for each $h\neq 0$, we obtain that $E_{i,i+g}\in Im(f)$ and this can be realized as an evaluation of $f$ by matrix untis $b_1, \dots, b_m$. But all matrix units are homogeneous in the Vasilovsky grading. So if we consider the algebra of $\mathbb{Z}_n$-graded polynomials $K\langle Y|\mathbb{Z}_n \rangle$ and take $y_i\in Y$ such that $\deg(y_i)=\deg(b_i)$ for $i=1, \dots, m$, then $f(y_1, \dots, y_m)$ is a nonzero $\mathbb{Z}_n$-graded polynomial of degree $h\in \mathbb{Z}_n\setminus \{0\}$. By \cite[Lemma 14]{CentronedeMello}, there exists a nonsingular matrix in the image of the graded  polynomial $f(y_1, \dots, y_m)$. This is a matrix of the form $\sum_{i=1}^nc_iE_{i,i+h}$ with $c_1, \dots, c_n\neq 0$. The above holds for any $h\neq 0$ in $\mathbb{Z}_n$. In particular, for $h=1$, we obtain matrix of the form $\sum_{i=1}^n c_iE_{i,i+1}$ with $c_1, \dots, c_n\neq 0$, and the proof is complete.
\end{proof}

As another application, we present a proof of the following theorem, which is the first part of Theorem 1 of \cite{MalevJAA}:

\begin{theorem}
   If $f$ is a multilinear polynomial evaluated on the matrix ring $M_2(K)$ (where $K$ is an arbitrary field of characteristic different from 2), then $Im(f)$ is either $\{0\}$, or $K$ (the set of scalar matrices), or $Im(f) \supseteq sl_2(K)$. 
\end{theorem}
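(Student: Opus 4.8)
The plan is to reduce the $M_2(K)$ case to the trichotomy already set up in Remark \ref{considerations}, together with a small amount of extra work to handle the one case where only the \emph{linear span} of $\mathrm{Im}(f)$ is known. First I would dispose of the trivial cases: if $f$ is a polynomial identity for $M_2(K)$ then $\mathrm{Im}(f)=\{0\}$, and if $f$ is central then $\mathrm{Im}(f)=K$; by the corollary following Lemma \ref{S1S2} these are exactly the cases where $f$ satisfies (S1). So from now on I assume $f$ does not satisfy (S1), and I must show $\mathrm{Im}(f)\supseteq sl_2(K)$. Note that the characteristic hypothesis (different from $2$) is precisely what is needed to apply the results of Section 4 with $n=2$, since $p\nmid 2$.

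Next, since $f$ does not satisfy (S1), Proposition \ref{notS1} already gives that the linear span of $\mathrm{Im}(f)$ contains $sl_2(K)$; the task is to upgrade ``linear span'' to ``contains'' in this low-dimensional situation. Here I would use the invariant-cone language from Section 2: $\mathrm{Im}(f)$ is closed under conjugation and, since $f$ is linear in each variable, under scalar multiplication, so $\mathrm{Im}(f)$ is an invariant cone. The key geometric fact for $n=2$ is that $sl_2(K)$ is essentially an irreducible invariant cone up to the behaviour of the zero matrix: every nonzero trace-zero $2\times 2$ matrix is conjugate either to a diagonal matrix $\mathrm{diag}(\lambda,-\lambda)$ or to a nilpotent Jordan block $\lambda E_{1,2}$, and scaling moves us within each conjugacy class's scalar multiples. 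Concretely, from the failure of (S1) one extracts (exactly as in the proof of Proposition \ref{notS1}, opening brackets in a nonzero off-diagonal evaluation) a nonzero off-diagonal matrix unit, hence $E_{1,2}\in\mathrm{Im}(f)$, hence $\lambda E_{1,2}\in\mathrm{Im}(f)$ for all $\lambda$ and, by conjugation, all nonzero nilpotents lie in $\mathrm{Im}(f)$.

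It remains to get the diagonalizable trace-zero matrices $\mathrm{diag}(\lambda,-\lambda)$ into $\mathrm{Im}(f)$. The cleanest route is to combine the two possible sub-cases of Remark \ref{considerations}: if $f$ satisfies (S2) then, by Lemma \ref{S1S2}(2), $\mathrm{Im}(f)\subseteq sl_2(K)$, and since we have shown $\mathrm{Im}(f)$ contains an open/dense-looking part of the invariant cone $sl_2(K)$ one argues that the spanning relation forces equality; more robustly, one writes any $\mathrm{diag}(\lambda,-\lambda)$ as a sum $E_{1,2}+E_{2,1}$-type combination conjugated appropriately — indeed $\begin{psmallmatrix}\lambda&0\\0&-\lambda\end{psmallmatrix}$ is conjugate to $\begin{psmallmatrix}0&\lambda\\\lambda&0\end{psmallmatrix}$, and a Vandermonde-style evaluation as in \cite[Lemma 14]{CentronedeMello} (invoked in the previous theorem) produces, from the failure of (S1), a graded evaluation $\sum c_i E_{i,i+1}$ with all $c_i\neq0$, i.e. a matrix similar to $\mathrm{diag}(c,-c)$ for suitable $c$; scaling then gives all of $sl_2(K)$. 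If instead $f$ does not satisfy (S2), then the last Theorem of Section 4 gives that the linear span of $\mathrm{Im}(f)$ is all of $M_2(K)$, and in particular a nonscalar diagonal matrix lies in $\mathrm{Im}(f)$; subtracting off its scalar part is illegitimate inside $\mathrm{Im}(f)$, but its semisimple trace-zero part is handled by the same similarity-plus-scaling argument, and one still obtains $sl_2(K)\subseteq\mathrm{Im}(f)$ (one does not need the full $M_2(K)$ conclusion for the statement).

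The main obstacle is the passage from the linear-span statement to the genuine containment, i.e. showing the diagonalizable trace-zero matrices actually occur as values of $f$ rather than merely as linear combinations of values. I expect to handle this via the graded-polynomial machinery of \cite{CentronedeMello}: the failure of (S1) yields a nonzero $\mathbb{Z}_2$-graded specialization of $f$ of nonzero degree, \cite[Lemma 14]{CentronedeMello} then produces a nonsingular (hence, being off-diagonal of size $2$, trace-zero diagonalizable) matrix in that graded image, and closure under conjugation and scalar multiplication of $\mathrm{Im}(f)$ spreads this over the whole orbit, which together with the nilpotent orbit already obtained exhausts $sl_2(K)$. The degenerate case where the relevant scalar $c$ could vanish is excluded because \cite[Lemma 14]{CentronedeMello} guarantees \emph{nonzero} entries.
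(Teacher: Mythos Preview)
Your proposal is correct and lands on essentially the same argument as the paper: reduce to the case where $f$ fails (S1), pass to a $\mathbb{Z}_2$-graded specialization of nonzero degree, invoke \cite[Lemma~14]{CentronedeMello} to produce enough hollow matrices in $\mathrm{Im}(f)$, and finish by noting every trace-zero $2\times 2$ matrix is conjugate to a hollow one. The (S2) case split in your third paragraph is an unnecessary detour---the paper never distinguishes whether (S2) holds and proceeds exactly as in your final paragraph---so you can excise that discussion and go straight from the failure of (S1) to the graded argument.
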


\begin{proof}
    Assume $f$ is not a central polynomial nor an identity. Then $Im(f)\not \subseteq K$. By Lemma \ref{S1S2}, $f$ does not satisfy $(S1)$. This means that there exist homogeneous elements $a_1, \dots, a_m\in M_2(K)$ with $\sum_{i=1}^m \deg(a_i) \neq 0$ (i.e., $\sum_{i=1}^m \deg(a_i) =1$ in $\mathbb{Z}_2$)  and $f(a_1, \dots, a_m) \neq 0$. As in the previous example, set $\deg(y_i) = \deg(a_i)$. Then $f(y_1, \dots, y_m)$ is a graded polynomial of degree $1$ in $\mathbb{Z}_2$. Now we use Lemma 14 of \cite{CentronedeMello}, which states that since $f(y_1, \dots, y_m)$ is a nonzero multilinear graded polynomial of nonzero degree, then the image of $f$ contains a nonzero singular matrix of degree $1$ in $\mathbb{Z}_n$. As a consequence, the image of $f$ contains the set $(M_2(K))_1$, which is the set of all hollow matrices (matrices with zero in the main diagonal). But any trace zero matrix is equivalent to a hollow matrix.
    As a consequence, we obtain that the image of $f$ contains all trace zero matrices.
\end{proof}

As a last application of the results of the previous section, we give a new prove of \cite[Lemma 9]{K-BMR}. This lemma is a key step in the proof  the Lvov-Kaplansky conjecture for $2\times 2$ matrices over a quadratically closed field. This will be an easy consequence of the following Lemma:

\begin{lemma}
    Let $f(x_1,\dots, x_m) \in K\langle X \rangle$ be a multilinear polynomial. Then the image of $f$ evaluated on $M_2(K)$ is $sl_2(K)$ if  and only if $f$ satisfies \emph{(S2)} and $f$ do not satisfy \emph{(S1)}.
\end{lemma}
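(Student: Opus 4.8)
The plan is to deduce this directly from the machinery already developed, in particular Theorem~\ref{equivalence}(1) together with the two applications proved just above. First I would handle the ``only if'' direction: if $Im(f)=sl_2(K)$, then $f$ is neither a polynomial identity (since $sl_2(K)\neq\{0\}$) nor a central polynomial (since $sl_2(K)\not\subseteq K$). By Lemma~\ref{S1S2}(1), failure of $Im(f)\subseteq K$ forces $f$ not to satisfy (S1). Also, since every element of $sl_2(K)$ has trace zero and the image of any homogeneous evaluation lies in $Im(f)$, in particular $tr(f(a_1,\dots,a_m))=0$ for homogeneous $a_i$ with $\sum\deg(a_i)=0$, so $f$ satisfies (S2). (Alternatively, simply invoke Lemma~\ref{S1S2}(2) with $Im(f)\subseteq sl_2(K)$.)

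For the ``if'' direction, suppose $f$ satisfies (S2) and does not satisfy (S1). By Lemma~\ref{S1S2}(2), $Im(f)\subseteq sl_2(K)$. It remains to prove the reverse inclusion. Here I would appeal to the application theorem proved immediately above (the first part of Theorem~1 of \cite{MalevJAA}): any multilinear $f$ on $M_2(K)$ has image $\{0\}$, $K$, or containing $sl_2(K)$. Since $f$ does not satisfy (S1), by Lemma~\ref{S1S2}(1) we have $Im(f)\not\subseteq K$, which rules out the first two alternatives; hence $Im(f)\supseteq sl_2(K)$. Combining with the inclusion $Im(f)\subseteq sl_2(K)$ from (S2) yields $Im(f)=sl_2(K)$, as desired.

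\textbf{On the obstacle.} There is essentially no genuine obstacle here, because the substantive work — showing that a multilinear polynomial on $M_2(K)$ whose image is not contained in $K$ must have image containing $sl_2(K)$ — has already been carried out in the preceding application (which itself rests on \cite[Lemma 14]{CentronedeMello} and the fact that every trace-zero $2\times 2$ matrix is conjugate to a hollow matrix). The only point requiring a little care is the hypothesis on the characteristic: the equality $sl_n(K)\cap K=\{0\}$ and the identification of (S2) with ``$Im(f)\subseteq sl_n(K)$'' via traces already presuppose $\mathrm{char}\,K=0$ or $\mathrm{char}\,K\nmid n$, so for $n=2$ one needs $\mathrm{char}\,K\neq 2$, which is exactly the standing hypothesis in the ambient application. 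I would therefore state the lemma under that same characteristic assumption and note that the proof is now immediate from Lemma~\ref{S1S2} and the preceding theorem.
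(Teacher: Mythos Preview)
Your proposal is correct and follows essentially the same route as the paper: the paper also dismisses the ``only if'' direction as trivial, and for the ``if'' direction it invokes Lemma~\ref{S1S2}(2) to get $Im(f)\subseteq sl_2(K)$ from (S2), and the preceding application theorem (the $M_2(K)$ trichotomy) to get $Im(f)\supseteq sl_2(K)$ from the failure of (S1). The only minor discrepancy is that your opening sentence mentions Theorem~\ref{equivalence}(1), which you then (rightly) never use; drop that reference, since Theorem~\ref{equivalence} is a conditional equivalence with the Lvov--Kaplansky conjecture and plays no role here.
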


\begin{proof}
    The "only if" part is trivial.
    
    From the proof of the above lemma, if $f$ does not satisfy $(S1)$, then $Im(f) \supseteq sl_2(K)$. Also, by Lemma \ref{S1S2}, if $f$ satisfy $(S2)$, then $Im(f)\subseteq sl_2(K)$. 
\end{proof}

\begin{lemma}[Lemma 9 of \cite{K-BMR}]
    If $f$ is a multilinear polynomial evaluated on the matrix ring $M_2(K)$, then $Im(f)$ is either $\{0\}$, $K$, $sl_2(K)$, $M_2(K)$, or $M_2(K)\setminus \tilde K$, where $\tilde K$ is the set of all nondiagonalizible and non-nilpotent matrices.
\end{lemma}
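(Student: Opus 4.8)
The plan is to combine the trichotomy already proved for $M_2(K)$ with the classical normal-form analysis of $2\times 2$ matrices. First I would invoke the previous lemma: if $f$ is multilinear on $M_2(K)$, then either $f$ is an identity (so $Im(f)=\{0\}$), or $f$ is central (so $Im(f)=K$), or $f$ fails (S1). In this last case $Im(f)\supseteq sl_2(K)$ by the Lemma preceding the target statement, so it remains to analyze which subsets $S$ with $sl_2(K)\subseteq S\subseteq M_2(K)$ can occur as $Im(f)$. Since $Im(f)$ is a union of conjugacy classes (invariance under conjugation) and is closed under scalar multiplication, $S$ is determined by which conjugacy classes outside $sl_2(K)$ it contains.

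Next I would classify the conjugacy classes of $M_2(K)$ not lying in $sl_2(K)$, i.e. the classes of matrices with nonzero trace. Over an arbitrary field a $2\times 2$ matrix of trace $\tau\neq 0$ is, up to conjugation, one of: a diagonalizable matrix (two eigenvalues in $K$, or a pair of conjugate eigenvalues in a quadratic extension — in the quadratically closed case only the former), or a non-semisimple matrix $\begin{pmatrix}\lambda & 1\\ 0 & \lambda\end{pmatrix}$ with $2\lambda=\tau\neq 0$ (so $\lambda\neq 0$), which is non-diagonalizable and non-nilpotent. The key dichotomy is: a trace-nonzero matrix is either diagonalizable or it is non-diagonalizable and non-nilpotent; the non-nilpotent non-diagonalizable matrices are exactly the set $\tilde K$ in the statement (note every element of $\tilde K$ has nonzero trace in characteristic $\neq 2$, since a nilpotent-trace Jordan block would be nilpotent). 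So $M_2(K)\setminus\tilde K$ consists of $sl_2(K)$ together with all diagonalizable matrices of nonzero trace.

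Then I would argue the closure property that forces the dichotomy. Suppose $f$ fails (S1) but $Im(f)\neq M_2(K)$; I claim $Im(f)=M_2(K)\setminus\tilde K$. For the inclusion $Im(f)\supseteq M_2(K)\setminus\tilde K$: since $Im(f)\supseteq sl_2(K)$, I only need every diagonalizable matrix of nonzero trace. Using that $Im(f)$ is a union of conjugacy classes containing $sl_2(K)$ and is scalar-closed, and that $f$ is linear in each variable, a standard argument (analogous to the proof of the preceding theorem on linear spans, but now leveraging that a $2\times 2$ diagonalizable matrix of nonzero trace can be written as $\lambda I + h$ with $h\in sl_2(K)$ and then realized by perturbing a known evaluation) shows every such matrix is hit; concretely, if $Im(f)$ misses some diagonalizable matrix $d$ with $tr(d)\neq 0$ but contains some nonscalar matrix with nonzero trace, one uses the scalar-multiplication and conjugation closure plus continuity of $f$ in its evaluations to reach $d$, and if $Im(f)$ contains \emph{no} nonscalar matrix of nonzero trace then $Im(f)\subseteq sl_2(K)$, contradiction. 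For the reverse inclusion $Im(f)\subseteq M_2(K)\setminus\tilde K$ (i.e. $Im(f)$ avoids $\tilde K$) in the non-surjective case: if some $a\in Im(f)\cap\tilde K$, then $a$ has nonzero trace $\tau$ and a nontrivial Jordan block; the subtlety is to show that then one can produce \emph{all} of $M_2(K)$. Here I would use that $\tilde K$ is a single "thick" orbit-family — the conjugates of $\begin{pmatrix}\lambda&1\\0&\lambda\end{pmatrix}$ for varying $\lambda\neq 0$ — together with scalar closure, to show $Im(f)$ then contains a diagonalizable matrix with distinct eigenvalues (by a limiting/perturbation argument inside the evaluation), hence by the first inclusion contains $M_2(K)\setminus\tilde K$, and finally contains some element of $\tilde K$ adjacent to enough of $M_2(K)$ to force surjectivity.

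The main obstacle I anticipate is precisely this last point: ruling out an $Im(f)$ that contains $sl_2(K)$, contains some matrices with nonzero trace, yet is strictly between $M_2(K)\setminus\tilde K$ and $M_2(K)$ — for instance one that contains $sl_2(K)$ and exactly one $\tilde K$-orbit. The resolution should come from the algebraic-variety structure of $Im(f)$: $Im(f)$ is the image of a polynomial map, hence (over a large enough field) a constructible set that is a union of conjugacy classes, and the conjugacy classes of non-diagonalizable matrices of a fixed nonzero trace sit in the closure of the diagonalizable ones of nearby trace; so containing one $\tilde K$-orbit while containing the surrounding diagonalizable matrices forces, by the structure of $f$ as a continuous (Zariski) map linear in each slot, containing a whole neighborhood and thus everything. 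Making this precise without the quadratically-closed hypothesis is the delicate part, and I would likely first prove it over a quadratically closed field (where the preceding lemma and \cite{K-BMR} live) and then remark that the list of possibilities is conjugation- and base-change-stable.
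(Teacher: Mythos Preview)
Your outline has the right opening but then goes astray, and the ``main obstacle'' you flag is in fact a non-issue.

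First, you never split on condition (S2), and as a result your central claim is false as stated: you assert that if $f$ fails (S1) and $Im(f)\neq M_2(K)$ then $Im(f)=M_2(K)\setminus\tilde K$, but $Im(f)=sl_2(K)$ is also possible in that situation. The paper handles this by splitting on (S2) immediately after (S1) fails: if $f$ satisfies (S2) then $Im(f)=sl_2(K)$ by the preceding lemma, and that case is finished. If $f$ fails (S2) then there exist homogeneous $a_1,\dots,a_m$ with $\sum_i\deg(a_i)=0$ and $tr(f(a_1,\dots,a_m))\neq 0$; since the total degree is $0$, this value is a \emph{diagonal} matrix of nonzero trace lying in $Im(f)$, from which the paper concludes that $Im(f)$ contains every diagonalizable matrix, hence $M_2(K)\setminus\tilde K\subseteq Im(f)$. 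Your route to the same inclusion --- writing a diagonalizable matrix as $\lambda I+h$ and ``perturbing a known evaluation'', or appealing to continuity --- is not a proof: there is no topology over an arbitrary field, $Im(f)$ is not known to be closed under sums, and the sketch never actually produces a concrete nonzero-trace element of $Im(f)$.

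Second, the obstacle you anticipate simply does not exist, because $\tilde K$ is an \emph{irreducible} invariant cone. Every element of $\tilde K$ is conjugate to $\bigl(\begin{smallmatrix}\lambda&1\\0&\lambda\end{smallmatrix}\bigr)$ for some $\lambda\neq 0$, and scalar multiplication by $\mu\neq 0$ followed by a conjugation sends this to $\bigl(\begin{smallmatrix}\mu\lambda&1\\0&\mu\lambda\end{smallmatrix}\bigr)$; thus any two elements of $\tilde K$ lie in the same orbit under conjugation-plus-scaling. So once $M_2(K)\setminus\tilde K\subseteq Im(f)$, either $Im(f)\cap\tilde K=\emptyset$ and $Im(f)=M_2(K)\setminus\tilde K$, or $Im(f)$ meets $\tilde K$ and then contains all of it, giving $Im(f)=M_2(K)$. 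No Zariski-closure, limiting, or base-change reasoning is needed, and there is no way for $Im(f)$ to contain ``exactly one $\tilde K$-orbit'' as you feared.
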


\begin{proof}
    Assume $f$ is neither central nor PI. Then $f$ does not satisfy (S1). Now we have two cases to consider. If $f$ satisfies (S2) then $Im(f)$ is $sl_2(K)$, by the above lemma. If $f$ does not satisty (S2), then $Im(f)\supseteq sl_2(K)$ is a proper inclusion. In particular, there exists a diagonal matrix with nonzero trace in $Im(f)$. As a consequence, $Im(f)$ contains all diagonalizable matrices. So $Im(f)$ contains $M_n(K)\setminus\tilde K$.  Now we have two cases to consider. If $Im(f)$ contains some element of $\tilde K$ then it contains the whole $\tilde K$, since it is an irreducible invariant cone, and in this case we have $Im(f) = M_2(K)$. Otherwise, we have $Im(f)= M_n(K)\setminus \tilde K$.
\end{proof}

\section{Funding}

This work was supported by São Paulo Research Foundation (FAPESP), grant 2018/23690-6.

\bibliographystyle{abbrv}
\bibliography{ref}

\begin{thebibliography}{10}

\bibitem{Dniester}
{\em { Dnestrovskaya tetrad }}.
\newblock Rossi\u{\i}skaya Akademiya Nauk Sibirskoe Otdelenie, Institut
  Matematiki im. S. L. Soboleva, Novosibirsk, fourth edition, 1993.
\newblock { Nereshennye problemy teorii kolets i module\u{i}}. [Unsolved
  problems in the theory of rings and modules], Compiled by V. T. Filippov, V.
  K. Kharchenko and I. P. Shestakov.

\bibitem{Azevedo}
S.~S. Azevedo.
\newblock Graded identities for the matrix algebra of order {$n$} over an
  infinite field.
\newblock {\em Comm. Algebra}, 30(12):5849--5860, 2002.

\bibitem{BahturinZaicev}
Y.~A. Bahturin and M.~V. Zaicev.
\newblock Group gradings on matrix algebras.
\newblock volume~45, pages 499--508. 2002.
\newblock Dedicated to Robert V. Moody.

\bibitem{Bondari}
S.~Bondari.
\newblock Constructing the polynomial identities and central identities of
  degree {$<9$} of {$3\times3$} matrices.
\newblock {\em Linear Algebra Appl.}, 258:233--249, 1997.

\bibitem{Brandao}
A.~Brand\~{a}o, Jr.
\newblock Graded central polynomials for the algebra {$M_n(K)$}.
\newblock {\em Rend. Circ. Mat. Palermo (2)}, 57(2):265--278, 2008.

\bibitem{BuzinskiWinstanley}
D.~Buzinski and R.~Winstanley.
\newblock On multilinear polynomials in four variables evaluated on matrices.
\newblock {\em Linear Algebra Appl.}, 439(9):2712--2719, 2013.

\bibitem{CentronedeMello}
L.~Centrone and T.~C. de~Mello.
\newblock Images of graded polynomials on matrix algebras.
\newblock {\em arXiv:2207.14100}, 2022.

\bibitem{Drensky}
V.~Drensky.
\newblock {\em Free algebras and {PI}-algebras}.
\newblock Springer-Verlag Singapore, Singapore, 2000.
\newblock Graduate course in algebra.

\bibitem{PlamenPedro}
P.~Fagundes and P.~Koshlukov.
\newblock Images of multilinear graded polynomials on upper triangular matrix
  algebras.
\newblock {\em arXiv:2205.10698v1}, 2022.

\bibitem{Fagundes}
P.~S. Fagundes.
\newblock The images of multilinear polynomials on strictly upper triangular
  matrices.
\newblock {\em Linear Algebra Appl.}, 563:287--301, 2019.

\bibitem{MesyanRestated}
P.~S. Fagundes, T.~C. De~Mello, and P.~H. Da~Silva Dos~Santos.
\newblock On the {M}esyan conjecture.
\newblock {\em Turkish J. Math.}, 46(5):1794--1808, 2022.

\bibitem{Fillmore}
P.~A. Fillmore.
\newblock On similarity and the diagonal of a matrix.
\newblock {\em Amer. Math. Monthly}, 76:167--169, 1969.

\bibitem{GargatedeMello}
I.~G. Gargate and T.~C. de~Mello.
\newblock Images of multilinear polynomials on n × n upper triangular matrices
  over infinite fields.
\newblock {\em Israel J. Math.}, to appear, 2022.

\bibitem{MalevO}
A.~Kanel-Belov, S.~Malev, C.~Pines, and L.~Rowen.
\newblock The images of multilinear and semihomogeneous polynomials on the
  algebra of octonions.
\newblock {\em arXiv:2204.07139}, 2022.

\bibitem{K-BMR}
A.~Kanel-Belov, S.~Malev, and L.~Rowen.
\newblock The images of non-commutative polynomials evaluated on {$2\times2$}
  matrices.
\newblock {\em Proc. Amer. Math. Soc.}, 140(2):465--478, 2012.

\bibitem{K-BMRJPAA}
A.~Kanel-Belov, S.~Malev, and L.~Rowen.
\newblock Power-central polynomials on matrices.
\newblock {\em J. Pure Appl. Algebra}, 220(6):2164--2176, 2016.

\bibitem{survey}
A.~Kanel-Belov, S.~Malev, L.~Rowen, and R.~Yavich.
\newblock Evaluations of noncommutative polynomials on algebras: methods and
  problems, and the {L}'vov-{K}aplansky conjecture.
\newblock {\em SIGMA Symmetry Integrability Geom. Methods Appl.}, 16:Paper No.
  071, 61, 2020.

\bibitem{Kemer}
A.~R. Kemer.
\newblock {\em Ideals of identities of associative algebras}, volume~87 of {\em
  Translations of Mathematical Monographs}.
\newblock American Mathematical Society, Providence, RI, 1991.
\newblock Translated from the Russian by C. W. Kohls.

\bibitem{Wang_nxn}
Y.~Luo and Y.~Wang.
\newblock On {F}agundes-{M}ello conjecture.
\newblock {\em J. Algebra}, 592:118--152, 2022.

\bibitem{MalevJAA}
S.~Malev.
\newblock The images of non-commutative polynomials evaluated on {$2\times 2$}
  matrices over an arbitrary field.
\newblock {\em J. Algebra Appl.}, 13(6):1450004, 12, 2014.

\bibitem{MalevQ}
S.~Malev.
\newblock The images of noncommutative polynomials evaluated on the quaternion
  algebra.
\newblock {\em J. Algebra Appl.}, 20(5):Paper No. 2150074, 8, 2021.

\bibitem{MalevJ}
S.~Malev, R.~Yavich, and R.~Shayer.
\newblock Evaluations of multilinear polynomials on low rank {J}ordan algebras.
\newblock {\em Comm. Algebra}, 50(7):2840--2845, 2022.

\bibitem{Mesyan}
Z.~Mesyan.
\newblock Polynomials of small degree evaluated on matrices.
\newblock {\em Linear Multilinear Algebra}, 61(11):1487--1495, 2013.

\bibitem{Fillmorerestated}
Y.-J. Tan.
\newblock Fillmore's theorem for matrices over factorial rings.
\newblock {\em Linear Multilinear Algebra}, 68(3):563--567, 2020.

\bibitem{Vasilovsky}
S.~Y. Vasilovsky.
\newblock {$Z_n$}-graded polynomial identities of the full matrix algebra of
  order {$n$}.
\newblock {\em Proc. Amer. Math. Soc.}, 127(12):3517--3524, 1999.

\end{thebibliography}

\end{document}